\documentclass{amsart}
\usepackage{all2021}
\newcommand{\Griso}{\mathrm{Gr}_{\mathrm{iso}}}
\newcommand{\HG}{\mathcal{H}^\mathrm{Gor}}
\newcommand{\cH}{\mathcal{H}}

\newcommand{\fm}{\mathfrak{m}}

\usepackage{color}

\begin{document}

\title{On the quadratic equations for odeco tensors}
\author{Benjamin Biaggi}
\address{Mathematical Institute, University of Bern, Sidlerstrasse 5,
3012 Bern, Switzerland}
\email{benjamin.biaggi@unibe.ch}

\author{Jan Draisma}
\address{Mathematical Instittue, University of Bern, Sidlerstrasse 5,
3012 Bern, Switzerland; and Department of Mathematics and Computer
Science, Eindhoven University of Technology, P.O.~Box 513, 5600 MB
Eindhoven, The Netherlands}
\email{jan.draisma@unibe.ch}

\author{Tim Seynnaeve}
\address{Mathematical Institute, University of Bern, Sidlerstrasse 5,
3012 Bern, Switzerland}
\email{tim.seynnaeve@unibe.ch}

\thanks{The authors were partly funded by Vici grant 639.033.514 from the
Netherlands Foundation for Scientific Research and by project
grant 200021\_191981 from the Swiss National Science Foundation.}

\maketitle

\begin{center}
{\em To Giorgio Ottaviani, on the occasion of his 60th birthday.}
\end{center}

\begin{abstract}
Elina Robeva discovered quadratic equations satisfied by orthogonally
decomposable (``odeco'') tensors. Boralevi-Draisma-Horobe\c{t}-Robeva
then proved that, over the real numbers, these equations {\em
characterise} odeco tensors. This raises the question to what extent
they also characterise the Zariski-closure of the set of odeco tensors
over the complex numbers. In the current paper we restrict
ourselves to {\em symmetric tensors of order three}, i.e., of format 
$n \times n \times n$. By providing an explicit
counterexample to one of Robeva's conjectures, we show that for $n
\geq 12$, 
these equations do not suffice. Furthermore, in the open subset where the linear span of the
slices of the tensor contains an invertible matrix, we show that Robeva's
equations cut out the limits of odeco tensors for dimension $n \leq 13$,
and not for $n \geq 14$ on.  To this end, we show that Robeva's equations
essentially capture the Gorenstein locus in the Hilbert scheme of $n$
points and we use work by Casnati-Jelisiejew-Notari on the
(ir)reducibility of this locus. 
\end{abstract}

\section{Introduction}

In \cite{Robeva16Orthogonal}, Robeva discovered quadratic
equations satisfied by orthogonally decomposable (odeco) tensors. In
\cite{Boralevi17Orthogonal}, it is proved that over the real numbers,
these quadratic equations in fact {\em characterise} odeco tensors.

This raises the question whether Robeva's equations also define
(the Zariski closure of) the set of complex odeco tensors. Indeed,
Robeva conjectured that they might even generate the prime ideal
of this Zariski closure, at least in the case of symmetric tensors
\cite[Conjecture 3.2]{Robeva16Orthogonal}. She proved this stronger
statement when the ambient space has dimension at most $3$ \cite[Figure
2]{Robeva16Orthogonal}. In general, however, the answer to the (weaker)
question is no, as already pointed out by Koiran in \cite{Koiran21}. In
this short paper, based on the first author's Master's thesis, we give
an explicit symmetric tensor in $(\CC^{12})^{\otimes 3}$ that satisfies
Robeva's equations but is not approximable by complex odeco tensors. We
do not know whether $12$ is the minimal dimension for which this happens,
but we show that if we impose a natural, additional {\em open} condition
on the tensor, then Robeva's equations characterise the Zariski closure
of the odeco tensors precisely up to dimension $13$.

A key idea in \cite{Boralevi17Orthogonal} is to associate an algebra $A$
to a symmetric three-tensor $T$ and to realise that Robeva's equations
express the associativity of that algebra. Furthermore, the symmetry of
the tensor implies that $A$ is commutative and that the bilinear form is
an invariant form on $A$; see below for definitions.  If, in addition,
$A$ contains a unit element---this turns out to be an open condition on
$T$---then $A$ is a {\em Gorenstein algebra}. Consequently, we can use
the results of \cite{Casnati15} on (ir)reducibility of the Gorenstein
locus in the Hilbert scheme of points in affine space to study the
variety defined by Robeva's equations.

In the opposite direction, we use this relation between algebras and
tensors to give an elementary proof that the Gorenstein locus in the
Hilbert scheme of $n$ points in $\AA^n$ has a dimension that grows
as $\Theta(n^3)$. This seems surprising at first, since the component
containing the schemes consisting of $n$ distinct reduced points has
dimension only $n^2$; on the other hand, it is well-known that the
dimension of the Hilbert scheme itself does grow as a cubic function
of $n$. 

This relation between algebras and tensors is, of course, not new: a
bilinear multiplication on $V$ can be thought of as an element of $V^*
\otimes V^* \otimes V$, and in the presence of a bilinear form on $V$,
the copies of $V^*$ may be identified with $V$. Further properties
of the algebra, such as associativity, cut out subvarieties of the
corresponding tensor space.  A classical reference for varieties of
algebras is \cite{Flanigan68}, where the term {\em algebraic geography}
is coined.  Another, more closely related paper is \cite{PoonenModuli},
whose Remark 4.5 is closely related to Lemma~\ref{lm:HG0}, and together
with \cite[Theorem 9.2]{PoonenModuli} gives the cubic lower bound on the
dimension of the Hilbert scheme mentioned above. Finally, we note that
the Zariski closure of the odeco tensors consists of tensors of
minimal border rank; equations for these are studied in the recent paper
\cite{Jelisiejew22}. In particular, \cite[Proposition 1.4]{Jelisiejew22},
which states that, in the $1$-generic locus, the $A$-Strassen equations
are sufficient to characterise tensors of minimal border rank is closely
related to our Theorem~\ref{thm:XY2}.

\subsection{Organisation of this paper}
In Section~\ref{sec:Setup}, we introduce the fundamental notions
of this paper, including Robeva's equations, of which we show that
they are the {\em only} quadrics that vanish on odeco tensors. We 
also state
our main results (Theorems~\ref{thm:XY1} and~\ref{thm:XY2}). In
Section~\ref{sec:Decomposition} we extend the well-known decomposition of
finite-dimensional unital algebras into products of local algebras to the
non-unital case. 
In Section~\ref{sec:Component} we recall that the Zariski
closure of the odeco tensors is a component of the variety cut
out by Robeva's equations; this was already established in \cite[Lemma
3.7]{Robeva16Orthogonal}. In Section~\ref{sec:Many} we show that there
are many weakly odeco tensors; combined with later results, this gives
a lower bound on the dimension of the Gorenstein locus in the Hilbert
scheme of $n$ points in $\AA^n$. In Section~\ref{sec:Unitalisation}
we show how to unitalise algebras along with an invariant bilinear
form to turn them into a local Gorenstein algebra, and vice versa. In
Section~\ref{sec:Nilpotent} we use this construction to motivate the
search for nilpotent counterexamples to Robeva's conjecture. Then,
in Section~\ref{sec:XY1} we find such a counterexample for $n=12$. In
Section~\ref{sec:XY2} we make the connection with the Gorenstein locus in
the Hilbert scheme of $n$ points and prove our second main result---that a
version of Robeva's conjecture holds in the (open) unital locus precisely
up to $n=13$. Finally, in Section~\ref{sec:Cubic} we show that the
dimension $d(n)$ of that Gorenstein locus is lower-bounded bounded by
a cubic polynomial in $n$. Since it is also trivially upper-bounded by
such a polynomial, we have that $d(n)=\Theta(n^3)$.

\section{Set-up} \label{sec:Setup}

\subsection{Weakly and strongly odeco tensors}
Let $V_\RR$ be a finite-dimensional real vector space equipped with a
positive-definite inner product $(.|.)$.

\begin{de}
A symmetric tensor $T \in S^3 V_\RR \subseteq V_\RR \otimes V_\RR \otimes
V_\RR$ is called {\em orthogonally decomposable} (odeco, for short) if,
for some integer $k \geq 0$, $T$ can be written as
\[ T=\sum_{i=1}^k v_i^{\otimes 3} \]
where $v_1,\ldots,v_k \in V_\RR$ are nonzero, pairwise orthogonal 
vectors. We write $Y(V_\RR) \subseteq S^3 V_\RR$ for the set of odeco
tensors.
\end{de}

Positive-definitiveness of the form implies that $(v_i|v_i)>0$ for each
$i$, so that $v_1,\ldots,v_k$ are linearly independent. Hence $k \leq
n:=\dim(V_\RR)$, and $Y(V_\RR)$ is a semi-algebraic set of dimension
at most $\binom{n}{2} + n$: the dimension of the orthogonal group plus
$n$ degrees of freedom for scaling. It turns out that $Y(V_\RR)$ is in
fact the set of real points of an algebraic variety defined by quadratic
equations that we will discuss below. Furthermore, it is easy to see
that the $k$ and the $v_i$ in the decomposition of an odeco tensor $T$
are unique, which implies that the dimension of $Y(V_\RR)$ is precisely
$\binom{n}{2} + n$.

We now set $V:=\CC \otimes V_\RR$ and extend $(.|.)$ to a
complex symmetric bilinear form ({\em not} a Hermitian form;
that setting is studied in \cite{Boralevi17Orthogonal}
under the name {\em udeco}). 

\begin{de}
A symmetric tensor $T \in S^3 V \subseteq V \otimes V \otimes V$ (where the tensor product is over
$\CC$) is {\em weakly odeco} if $T$ can be written as
\[ T=\sum_{i=1}^k v_i^{\otimes 3} \]
where the $v_i$ are nonzero pairwise orthogonal vectors. It is called {\em
strongly odeco} if $T$ admits such a decomposition where, in addition,
$(v_i|v_i) \neq 0$. We write $Y(V) \subseteq S^3V$ for the Zariski
closure of the set of strongly odeco tensors.
\end{de}

\begin{re}
The set called $\mathrm{SODECO}_n(\CC)$ in \cite{Koiran21} consists of
the strongly odeco tensors in $S^3 \CC^n$. Koiran proves that these are
precisely the set of symmetric tensors whose $n \times n$ slices are diagonalisable
and commute.
\end{re}

As pointed out above, every element of $Y(V_\RR)$, regarded as an element
of $S^3 V$, is strongly odeco. Since the real orthogonal group $O(V_\RR)$
is Zariski dense in the complex orthogonal group $O(V)$, $Y(V)$ is 
the Zariski closure of $Y(V_\RR)$. On the other hand, due to the presence
of isotropic vectors and higher-dimensional spaces in $V$, the set of
weakly odeco tensors strictly contains the set of strongly odeco tensors;
we will return to this theme shortly.  First we give the easiest example
that shows the need for a Zariski closure in the definition of $Y(V)$.

\begin{ex} \label{ex:DimTwo}
Consider the vector space $V=\CC^2$ equipped with the symmetric bilinear
form for which $(e_1|e_2)=1$ and all other products are zero. Then
\[ S=e_1 \otimes e_2 \otimes e_2 + e_2 \otimes e_1 \otimes
e_2 + e_2 \otimes e_2 \otimes e_1 
= \frac{1}{2} \lim_{t \to 0}[ 
(t^2 e_1 + t^{-1} e_2)^{\otimes 3} + 
(t^2 e_1 - t^{-1} e_2)^{\otimes 3} ]
\]
shows that $S$ is a limit of strongly odeco tensors. So $S \in Y(V)$, but
$S$ is not strongly odeco, because its tensor rank is $3$ rather than $2$.

(Of course, since all nondegenerate symmetric bilinear forms on a
finite-dimen\-sional complex vector space are equivalent, we
could have changed coordinates such that the bilinear form
is the standard form.)
\end{ex}

\subsection{Commutative algebras from symmetric tensors}

We now associate an algebra structure on $V$ to a tensor. 

\begin{de}
We identify $V$ with $V^*$ via
the map $v \mapsto (v|.)$. Then any element $T \in
V^{\otimes 3}$ can also be regarded as an element of $V^* \otimes V^*
\otimes V$, hence a bilinear map $\mu_T:V \times V \to V$.
We call $V$ with $\mu_T$ the {\em algebra associated to
$T$}. 
\end{de}

If $T$ is symmetric, then, first, $\mu_T$ is commutative,
and second, $\mu_T$ satisfies 
\[ (\mu_T(x,y) | z) = (x|\mu_T(y,z)), \]
i.e., the bilinear form $(.|.)$ is {\em invariant for the
multiplication $\mu_T$}. When $T$ is fixed in the context,
then we will often just write $xy$ instead of $\mu_T(x,y)$. 

\subsection{Odeco implies associative}

\begin{prop} \label{prop:Associative}
If $T \in S^3 V$ is weakly odeco, then $\mu_T$ is
associative. 
\end{prop}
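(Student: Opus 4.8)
The plan is a short direct computation once one has an explicit formula for the multiplication. Fix a weakly odeco decomposition $T=\sum_{i=1}^{k}v_i^{\otimes 3}$ with the $v_i$ nonzero and pairwise orthogonal. Unwinding the definition of $\mu_T$ — the identification $V\cong V^*$, $v\mapsto(v|{\cdot})$, turns the rank-one tensor $v^{\otimes 3}$ into the bilinear map $(x,y)\mapsto (v|x)(v|y)\,v$, and the assignment $T\mapsto\mu_T$ is linear — one obtains the explicit formula
\[
\mu_T(x,y)=\sum_{i=1}^{k}(v_i|x)(v_i|y)\,v_i\qquad(x,y\in V).
\]
Note that, although the final statement (associativity) is not a linear condition on $T$, this formula for $\mu_T$ is legitimately obtained summand-by-summand, since $T\mapsto\mu_T$ is linear.

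Next I would substitute this into both sides of the associativity law. Applying the formula twice gives $\mu_T(\mu_T(x,y),z)=\sum_{i,j}(v_i|x)(v_i|y)(v_i|v_j)(v_j|z)\,v_j$; since $(v_i|v_j)=0$ for $i\neq j$, the double sum collapses to the diagonal, namely $\sum_{i}(v_i|v_i)(v_i|x)(v_i|y)(v_i|z)\,v_i$. Running the identical computation on $\mu_T(x,\mu_T(y,z))$ produces the same expression — the scalar coefficient $(v_i|v_i)(v_i|x)(v_i|y)(v_i|z)$ is symmetric in $x,y,z$ — so the two sides agree for all $x,y,z\in V$, which is exactly associativity of $\mu_T$.

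There is no real obstacle here: the computation is forced as soon as one has the multiplication formula, and the pairwise orthogonality of the $v_i$ does all the work. The only point worth flagging is that non-isotropy $(v_i|v_i)\neq 0$ is never used — in particular the $v_i$ may be linearly dependent — so associativity holds for every weakly odeco tensor, not merely the strongly odeco ones. The substantive (converse) questions, namely to what extent associativity together with commutativity and invariance of the form forces $T$ into $Y(V)$, are precisely what the remainder of the paper is about.
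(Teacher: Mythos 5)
Your argument is correct and is essentially the paper's own proof: both derive the formula $\mu_T(x,y)=\sum_i(v_i|x)(v_i|y)v_i$, expand the nested multiplication into a double sum, collapse it to the diagonal via pairwise orthogonality, and observe the resulting expression is symmetric in $x,y,z$. The side remark that $(v_i|v_i)\neq 0$ is never needed matches the paper's explicit statement of the result for weakly (not just strongly) odeco tensors.
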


This was observed in \cite{Boralevi17Orthogonal} in the real case, but the
argument easily generalises, as follows. 

\begin{proof}
Write 
\[ T=\sum_{i=1}^k v_i^{\otimes 3} \]
where the $v_i$ are pairwise orthogonal. Let $x,y,z \in V$. Then 
\begin{align*}
\mu_T(x,\mu_T(y,z))
=& \mu_T(x,\sum_{i=1}^k (y|v_i)(z|v_i)v_i)
= \sum_{j=1}^k \sum_{i=1}^k (y|v_i)(z|v_i)(x|v_j)(v_i|v_j)v_j \\
=& \sum_{i=1}^k (y|v_i)(z|v_i)(x|v_i)(v_i|v_i)v_i  
\end{align*}
where the last equality uses that $(v_i|v_j)=0$ whenever $i
\neq j$. A similar computation for $\mu_T(\mu_T(x,y),z)$
yields the exact same result.
\end{proof}

\subsection{Robeva's equations}
For any fixed $x,y,z$ (e.g. chosen from a basis of $V$), the condition
that $\mu_T(x,\mu_T(y,z))$ equals $\mu_T(\mu_T(x,y),z)$ translates into
$n$ quadratic equations for $T$. 
All these equations
together, with varying $x,y,z$, are called {\em Robeva's equations}.

\begin{de}
We denote by $X(V) \subseteq S^3 V$ the affine variety defined by
Robeva's equations, i.e., 
\[ X(V):=\{T \in S^3 V \mid \mu_T \text{ is associative}\}.
\qedhere
\]
\end{de}

To prove that Robeva's equations are {\em all} quadratic equations
satisfied by strongly odeco tensors, we reinterpret them as follows. 
The condition that $\mu_T(x,\mu_T(y,z))$ equals $\mu_T(\mu_T(x,y),z)$ means that for every $w \in V$, we have $$(\mu_T(x,\mu_T(y,z))|w) = (\mu_T(\mu_T(x,y),z)|w),$$ which can be rewritten as 
\begin{equation} \label{eq:Robevaxyzw}
(\mu_T(y,z)|\mu_T(x,w)) = (\mu_T(x,y)|\mu_T(z,w)).
\end{equation}
In other words: Robeva's equations precisely express that the 4-linear map
\begin{align*}
T \bullet T: V^{4} &\to \CC \\
 (x,y,z,w) &\mapsto (\mu_T(x,y)|\mu_T(z,w))
\end{align*}
is invariant under arbitrary permutations of $(x,y,z,w)$. This was, in
fact, Robeva's original description  of these
quadratic equations in \cite{Robeva16Orthogonal}. 

\begin{prop}
	The only quadratic equations vanishing on $Y(V)$ are Robeva's equations.
\end{prop}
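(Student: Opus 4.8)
The plan is to show that the quadratic equations vanishing on $Y(V)$ form a $\mathrm{GL}(V)$-subrepresentation (in fact, since everything is symmetric under the orthogonal group $O(V)$ which preserves the form, one should work with $O(V)$-representations, but it is cleaner to note that $Y(V)$ is actually $\mathrm{GL}(V)$-invariant after identifying $V\cong V^*$ via the form) of the space $S^2(S^3 V)^*$ of all quadrics on $S^3 V$, and then to decompose this space into irreducibles and check which summands can occur. Concretely, the degree-two part of the ideal $I(Y(V))$ sits inside $\operatorname{Sym}^2(S^3 V^*)$; I would first argue that both Robeva's equations and the full space of degree-two elements of $I(Y(V))$ are stable under the natural action, so it suffices to compare them summand by summand in a decomposition into irreducibles.

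The key computational input is the Cauchy-type decomposition of $\operatorname{Sym}^2(S^3 V)$ (equivalently of the quadrics, after dualising, which is harmless since $V\cong V^*$): one has $\operatorname{Sym}^2(S^3 V) \cong S^6 V \oplus S^{4,2} V \oplus S^{2,2,2} V$ as $\mathrm{GL}(V)$-modules. Robeva's equations, being the defining equations of $X(V)$, are the quadrics expressing that $T\bullet T$ is symmetric; since $T\bullet T$ lives a priori in a tensor space on which the ``symmetrisation defect'' is measured by the complement of $S^6V$ inside the relevant space, Robeva's equations span precisely the $S^{4,2}V$ part (this is essentially Robeva's original observation, and I would cite/reprove it: the map $T \mapsto (T\bullet T)_{\text{non-symmetric part}}$ is a quadratic map whose components are Robeva's equations, and its target is the isotypic component $S^{4,2}V$). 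So the claim reduces to: the degree-$2$ part of $I(Y(V))$ contains no copy of $S^6V$ and no copy of $S^{2,2,2}V$.

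To rule out these two summands, I would exhibit, for each, a strongly odeco tensor (or a limit of them) that is not annihilated by any nonzero vector in that isotypic component. For the $S^6 V$ summand: a quadric in this component is, up to the identification, a polynomial of the form $T\mapsto \ell(T)^2$ pulled back, or more precisely its highest weight vector evaluates non-trivially on $T = e_1^{\otimes 3}$ type tensors; since $S^6V$ as a space of quadrics on $S^3V$ is spanned by the $\mathrm{GL}$-orbit of the square of a coordinate, and for $T=\sum v_i^{\otimes 3}$ with the $v_i$ an orthonormal basis one gets a tensor on which these obviously do not vanish, the $S^6V$ component of $I(Y(V))_2$ is zero. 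The $S^{2,2,2}V$ summand is the genuinely delicate one and I expect it to be \textbf{the main obstacle}: this is the ``$2\times2$ minor'' type component (it is the span of the pullbacks of $2\times 2$ minors of a generic slice, roughly), and one must show it does \emph{not} vanish on all of $Y(V)$. Here I would use that $Y(V)$ contains tensors $T=\sum_{i=1}^n v_i^{\otimes 3}$ where $v_1,\dots,v_n$ range over all orthonormal bases, so the space of linear spans of slices of such $T$ is large; choosing $v_i = e_i$ one checks a single highest-weight vector of $S^{2,2,2}V$ (a specific cubic-in-slices expression) is nonzero on this $T$, and then invariance finishes it. The bookkeeping in this last step — writing down the highest weight vector of $S^{2,2,2}V$ explicitly as a quadric on $S^3V$ and evaluating — is the part that needs care; everything else is representation-theoretic routine.

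Finally, I should double-check the edge behaviour in low dimensions: the decomposition $\operatorname{Sym}^2(S^3 V)\cong S^6V\oplus S^{4,2}V\oplus S^{2,2,2}V$ is valid for $\dim V\ge 3$; for $\dim V\le 2$ the $S^{2,2,2}$ term vanishes and the statement is consistent with Robeva's verified cases. Assembling these pieces — invariance, the three-term decomposition, identification of Robeva's equations with the $S^{4,2}$ isotypic part, and non-vanishing of the other two components on suitable odeco tensors — yields that $I(Y(V))_2$ equals the span of Robeva's equations, which is the assertion.
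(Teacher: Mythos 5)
Your proposal rests on the claim that $Y(V)$ is $\mathrm{GL}(V)$-invariant ``after identifying $V \cong V^*$ via the form'', and that both $I(Y(V))_2$ and the span of Robeva's equations are therefore $\mathrm{GL}(V)$-submodules of $S^2((S^3V)^*)$. This is false, and it breaks the entire strategy. Orthogonal decomposability is genuinely tied to the bilinear form: if $T = \sum_{i=1}^n v_i^{\otimes 3}$ with $v_1,\dots,v_n$ orthonormal and $g \in \mathrm{GL}(V)$, then $g\cdot T = \sum_i (gv_i)^{\otimes 3}$; for generic $T$ the rank-$n$ Waring decomposition is essentially unique, so for $g$ outside $\CC^* \cdot O(V)$ the vectors $gv_i$ are not pairwise orthogonal and $g\cdot T \notin Y(V)$. (Already for $n=2$: $e_1^{\otimes 3}+e_2^{\otimes 3}$ is odeco, $e_1^{\otimes 3}+(e_1+e_2)^{\otimes 3}$ is not.) Hence $Y(V)$ is only $\CC^* \cdot O(V)$-stable, $I(Y(V))_2$ is only an $O(V)$-submodule, and the same is true of Robeva's equations (which use the form through $\mu_T$ and $T\bullet T$). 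So there is no decomposition into a short list of $\mathrm{GL}$-irreducibles to compare; under $O(V)$ both $S^{(6)}V$ and $S^{(4,2)}V$ split further, and the ``summand-by-summand'' bookkeeping becomes a different and much harder problem. A second, independent error: the plethysm is $S^2(S^3V) \cong S^{(6)}V \oplus S^{(4,2)}V$; there is \emph{no} $S^{(2,2,2)}V$ summand (you have conflated this with $S^3(S^2V)$, which is where $S^{(2,2,2)}V$ appears). The piece you call ``the main obstacle'' and the one ``genuinely delicate'' part of your plan simply is not there, which is itself a signal that the framework is misaligned with the problem.

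The paper's proof sidesteps all of this with a direct linear-algebra argument that you should compare with. It observes that Robeva's quadrics are exactly the pullbacks, along the composition $S^3V \xrightarrow{\nu_2} S^2(S^3V) \xrightarrow{\pi} S^2(S^2V)$, of linear functionals on $S^2(S^2V)$ vanishing on $S^4V$, where $\pi$ is the contraction $(v_1\otimes v_2\otimes v_3)\otimes(w_1\otimes w_2\otimes w_3) \mapsto (v_1|w_1)(v_2\otimes v_3)\otimes(w_2\otimes w_3)$. For an odeco tensor $T = \sum_i v_i^{\otimes 3}$ one computes $\pi(\nu_2(T)) = \sum_i (v_i|v_i)\,v_i^{\otimes 4}$, and such tensors span $S^4V$. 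So no functional pulled back through $\pi$ beyond Robeva's can vanish on $Y(V)$. This replaces the representation theory by a one-line evaluation on odeco tensors; it does not require any invariance stronger than $O(V)$-equivariance of the map $\pi$, which is manifest.
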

\begin{proof}
If we consider the natural map 
\begin{align*}
	S^2(S^3 V) &\to S^2 (S^2 V) \\
	(v_1 \otimes v_2 \otimes v_3) \otimes (w_1 \otimes w_2 \otimes
	w_3) &\mapsto (v_1|w_1) (v_2 \otimes v_3) \otimes (w_2 \otimes
	w_3), 
\end{align*}
then for any  $T \in S^3V$, we can identify $T \bullet T$ with the image of $T$ under the composition
\begin{equation} \label{eq:RobevaVeronese}
S^3V \xrightarrow{\nu_2} S^2S^3V \to S^2S^2V,
\end{equation}
where $\nu_2$ is the second Veronese embedding. 
Then $T$ satisfies Robeva's equations if and only if $T \bullet T \in S^4 V \subseteq S^2S^2V$.

Since quadratic equations on $Y(V)$ correspond to linear equations on $\nu_2(Y(V))$, we want to show that the image of (\ref{eq:RobevaVeronese}), when restricted to $Y(V)$, linearly spans $S^4V$. But (\ref{eq:RobevaVeronese}) maps an odeco tensor $\sum_{i=1}^k v_i^{\otimes 3}$ to the odeco tensor $\sum_{i=1}^k (v_i|v_i) v_i^{\otimes 4}$, and these tensors clearly span $S^4V$.
\end{proof}

\subsection{The main question}

By Proposition~\ref{prop:Associative}, $X(V)$ contains weakly odeco
tensors, hence in particular the variety $Y(V)$. On the other hand, the
results of \cite{Boralevi17Orthogonal} imply that the set of real points of $X(V)$ equals
$Y(V_\RR)$. This raises the following question. 

\begin{que} \label{que:Central}
For which dimensions $n=\dim(V)$ is $X(V)$ {\em equal} to $Y(V)$?
\end{que}

Our partial answer to this question is as follows. 

\begin{thm} \label{thm:XY1}
For $n=\dim(V) \leq 3$, $X(V)$ equals $Y(V)$. For $n \geq 12$, we
have $X(V) \subsetneq Y(V)$. 
\end{thm}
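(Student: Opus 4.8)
The first assertion, that $X(V)=Y(V)$ for $n\leq 3$, is essentially due to Robeva (she verified her conjecture for the coordinate ring up to dimension $3$), so I would simply cite \cite[Figure 2]{Robeva16Orthogonal} together with the observation, recorded just before Question~\ref{que:Central}, that $Y(V)$ is always a subvariety of $X(V)$; the content is then that in these small dimensions Robeva's quadrics already cut out exactly $Y(V)$. No new argument is needed here.

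The real work is the second assertion, $X(V)\subsetneq Y(V)$ for $n\geq 12$. The plan is to exhibit a single explicit symmetric tensor $T\in S^3(\CC^{12})$ whose associated multiplication $\mu_T$ is associative (so $T\in X(V)$) but which is \emph{not} a limit of strongly odeco tensors (so $T\notin Y(V)$), and then to promote this from $n=12$ to all $n\geq 12$ by a padding argument. As the introduction signals, the natural place to look is among \emph{nilpotent} algebras: if $A=(V,\mu_T)$ is a commutative associative algebra carrying the invariant form $(\cdot\,|\,\cdot)$, then $T\in Y(V)$ forces strong structural constraints (via the decomposition theory of Section~\ref{sec:Decomposition} and the unitalisation of Section~\ref{sec:Unitalisation}), and one engineers a nilpotent $A$ violating them. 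Concretely, I expect the counterexample to be built from a local Gorenstein Artinian algebra of length close to $12$ lying outside the ``smoothable'' component of the Hilbert scheme — the smallest such examples (e.g. the length-$14$ local algebra of Iarrobino, or truncations thereof) are exactly what make $n=12$ or so the threshold — and then to transport it back to a tensor via an invariant form, checking directly that the resulting $T$ satisfies \eqref{eq:Robevaxyzw} for all basis quadruples. The non-membership in $Y(V)$ is then argued by showing that every tensor in $Y(V)$ with this algebra's Hilbert function would have to be smoothable, contradicting the choice of $A$; this is where Theorem~\ref{thm:XY2} and the Casnati--Jelisiejew--Notari results \cite{Casnati15} enter.

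For the passage from $n=12$ to $n\geq 12$, I would use the obvious embedding $V_0\hookrightarrow V$ of the $12$-dimensional space into an $n$-dimensional one carrying a form restricting to the original, together with an orthogonal complement on which we put, say, a direct sum with a one-dimensional odeco summand $v^{\otimes 3}$ (or simply zero); associativity of $\mu_T$ is preserved under orthogonal direct sums, so $T$ stays in $X(V)$, while membership in $Y(V)$ would descend to the $12$-dimensional piece and again contradict the counterexample. The main obstacle is the verification that the chosen $T$ genuinely lies outside $Y(V)$: checking $T\in X(V)$ is a finite (if tedious) computation, but proving non-approximability by strongly odeco tensors is a closure statement that cannot be done by inspection. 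The cleanest route is the algebraic one: identify precisely which Artinian algebras arise as degenerations of products of $1$-dimensional algebras (these are the unital Gorenstein \emph{smoothable} ones), reduce via Sections~\ref{sec:Unitalisation}--\ref{sec:Nilpotent} to a statement about the Gorenstein locus of $\operatorname{Hilb}^m(\AA^m)$, and invoke the known non-smoothable Gorenstein example. The bookkeeping of exactly how the length $m$ of the Artinian algebra relates to the tensor dimension $n$ (unitalisation adds one to the dimension, and the invariant-form condition is an extra constraint) is what pins the bound at $12$, and getting that translation exactly right is the delicate point.
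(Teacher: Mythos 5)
Your overall strategy matches the paper's: locate a non-smoothable local Gorenstein Artinian algebra, transport it to a nilpotent algebra-with-invariant-form, and read off a tensor in $X(V)\setminus Y(V)$. The citation of Robeva for $n\leq 3$ and the idea of using the length-$14$ threshold are both right. But there are two concrete problems in the way you close the argument.

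First, you propose to conclude non-membership in $Y(V)$ by invoking Theorem~\ref{thm:XY2} together with Casnati--Jelisiejew--Notari. This cannot work as stated: Theorem~\ref{thm:XY2} compares $X^0(V)$ (the \emph{unital} locus) with the Gorenstein Hilbert scheme, while your candidate tensor $T$ has nilpotent $\mu_T$, so $T\notin X^0(V)$ and the theorem says nothing about it. (Moreover, applying Theorem~\ref{thm:XY2} plus \cite{Casnati15} directly only detects reducibility from $n=14$ onward, not at $n=12$.) The actual mechanism in the paper is the unitalisation morphism of Proposition~\ref{prop:UnitalisationMorphism}: if the $12$-dimensional nilpotent $T$ were a limit of strongly odeco tensors $T_i$, then $\widetilde T=\lim\widetilde T_i$ would exhibit the $14$-dimensional algebra $B\cong\widetilde A$ as a limit of algebras that are orthogonal direct sums of $12$ copies of $\CC$ and one $\CC[x]/(x^2)$, hence smoothable; since the smoothable locus is closed, $B$ would be smoothable, contradicting its choice. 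That direct argument is what pins $n=12$, and it bypasses Theorem~\ref{thm:XY2} entirely.

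Second, your bookkeeping ``unitalisation adds one to the dimension'' is wrong: the construction $\widetilde A=\CC 1\oplus A\oplus\CC y$ raises the dimension by \emph{two} (and dually, de-unitalising a $14$-dimensional local Gorenstein $B$ via $A:=M/M^d$ drops it by two to $12$). With ``adds one'' the arithmetic would not land on $12$ at all. Finally, for $n>12$ your padding by orthogonal one-dimensional odeco summands $v^{\otimes 3}$ is the right move (it extends the algebra by unital factors $\CC$, and Proposition~\ref{prop:Decomposition} then lets you descend membership in $Y$ to the $12$-dimensional piece); the ``or simply zero'' alternative is not safe, since that enlarges the nilpotent ideal $N$ and Proposition~\ref{prop:Decomposition} does not split $N$ further.
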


In fact, the result for $n \leq 3$ is due to Robeva \cite[Figure
2]{Robeva16Orthogonal}. Our contribution is a counterexample to
\cite[Conjecture 3.2]{Robeva16Orthogonal} for $n=12$. 

\subsection{The existence of a unit}

The answer in Theorem~\ref{thm:XY1} is unsatisfactory because of the large interval
of dimensions $n=\dim(V)$ for which we do not know whether $X(V)$ is
strictly larger than $Y(V)$. However, in a certain open subset of $S^3
V$, we {\em do} know precisely where the two stop being equal.

\begin{lm} \label{lm:Unit}
The condition on $T \in X(V)$ that $\mu_T$ has a
multiplicative unit
element is equivalent to the condition that there exists a
$v \in V$ such that the multiplication map $L_x:v \mapsto
\mu_T(x,v)$ is invertible. This is an open condition on $T$. 
\end{lm}

In the case of ordinary tensors, the analoguous condition is called
$1$-genericity; see, e.g., \cite{Jelisiejew22}.

\begin{proof}
We write $xv$ instead of $\mu_T(x,v)$.
For the implication $\Rightarrow$ take $x$ to be the unit
element. For the implication $\Leftarrow$, assume that $L_x$ is
invertible. Then in particular there exists an $e \in V$ such
that $ex=xe=x$. We then find, for any $y \in V$, that 
\[ ey=exL_x^{-1}(y)=xL_x^{-1}(y)=y \]
so that $e$ is a unit element. 

It follows that $(V,\mu_T)$ is not unital if and only if $\det(L_x)=0$
for all $x \in V$; this is a system of degree $n$ polynomial equations
on $T \in S^3(V)$ defining the complement of the unital
locus. 
\end{proof}

We will refer to the variety in $S^3(V)$ defined by the degree $n$
equations in the proof above as the {\em non-invertibility locus}.
If $T \in S^3(V)$ is not in the non-invertibility locus, then $\mu_T$
needs not have a unit element; but it does if furthermore $T$ lies in
$X(V)$---we have used associativity of $\mu_T$ in the proof above.

We define $X^0(V)$ as
\[ X^0(V):=\{T \in X(V) \mid \mu_T \text{ is unital}\}, \]
and similarly for $Y^0(V)$. Note that
$\overline{Y^0(V)}=Y(V)$. This leads to the following
weakening of Question~\ref{que:Central}. 

\begin{que} \label{que:Central2}
For which dimensions $n=\dim(V)$ is $X^0(V)$ {\em equal} to $Y^0(V)$? In
other words, for which dimensions do Robeva's quadrics characterise
the set of limits of strongly odeco tensors in the complement of the
non-invertibility locus?
\end{que}

To answer this question, we will prove the following theorem.

\begin{thm} \label{thm:XY2}
The number of irreducible components of $X^0(\CC^n)$ equals that of the
Gorenstein locus in the Hilbert scheme of $n$ points in $\AA^n_\CC$.
\end{thm}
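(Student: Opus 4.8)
The plan is to construct an explicit bijection between the irreducible components of $X^0(\CC^n)$ and those of the Gorenstein locus $\HG$ in the Hilbert scheme $\cH_n(\AA^n)$, by exhibiting a morphism (or at least a constructible, bijective-on-components correspondence) between the two. Concretely, given $T \in X^0(\CC^n)$, the algebra $(V,\mu_T)$ is commutative, associative, unital, and carries a nondegenerate invariant symmetric bilinear form; hence it is a finite-dimensional local-or-product Gorenstein $\CC$-algebra of dimension $n$, and choosing a unit element realises $\operatorname{Spec}(V,\mu_T)$ as a length-$n$ Gorenstein subscheme of $\AA^n$ (after fixing an identification of $V$ with the regular representation, or rather embedding it via generators). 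In the other direction, a length-$n$ Gorenstein subscheme $Z \subseteq \AA^n$ has coordinate ring a Gorenstein Artinian algebra $A$ of dimension $n$, and the Gorenstein property furnishes a nondegenerate invariant bilinear form on $A$ (any choice of generator of the socle, composed with a linear functional, gives an invariant form; nondegeneracy is exactly Gorenstein-ness), which one transports to $V = \CC^n$ via a linear isomorphism, producing a symmetric tensor $T \in S^3 V$ with $\mu_T$ associative, i.e., $T \in X^0(V)$.

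The key steps, in order, would be: (1) Package the earlier material — $\mu_T$ commutative, invariant form, Lemma~\ref{lm:Unit} giving unitality — to identify $X^0(V)$ set-theoretically with the set of pairs (commutative unital associative $\CC$-algebra structure on $V$, invariant nondegenerate symmetric bilinear form equal to the fixed $(.|.)$), up to nothing (the form is fixed). (2) Recall the classical fact that a finite-dimensional commutative algebra admitting a nondegenerate invariant form is precisely a Gorenstein algebra, and make the correspondence $T \mapsto (\text{the abstract algebra } A_T)$ precise. (3) Build the map on the scheme side: fix generators / a closed embedding to pass from an abstract Gorenstein algebra of dimension $n$ to a point of $\HG \subseteq \cH_n(\AA^n)$; here one must quotient by $\mathrm{GL}_n$ acting by change of coordinates on $\AA^n$ to get a well-defined point, and conversely by $\mathrm{GL}(V)$ (the orthogonal group, since the form is fixed — or all of $\mathrm{GL}$ if one lets the form vary) on the tensor side. (4) Show these constructions are mutually inverse on the level of $\mathrm{GL}$-orbits, hence descend to a bijection on the sets of orbits, and — crucially — that they are \emph{algebraic} (morphisms of the relevant parameter spaces, or at least send constructible irreducible pieces to constructible irreducible pieces and are continuous for the Zariski topology), so that irreducible components correspond to irreducible components. (5) Conclude by counting: the number of components of $X^0(\CC^n)$ equals the number of components of the orbit space equals the number of components of $\HG$.

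The main obstacle I anticipate is the bookkeeping around the group actions and the resulting quotients: $X^0(V)$ itself is \emph{not} a quotient — it is an honest affine variety sitting in $S^3 V$ — whereas $\HG$ is a locally closed subscheme of a Hilbert scheme, and the passage between ``abstract Gorenstein algebra'' and ``subscheme of $\AA^n$'' inherently involves a choice of $n$ generators, i.e., a torsor under $\mathrm{GL}_n$ (or an affine space of choices once one normalises). So the clean statement is that there is a $\mathrm{GL}_n$-torsor (or a principal bundle, perhaps only in the \'etale or fppf sense, or after stratification) $\widetilde{\HG} \to \HG$ and a compatible torsor structure relating $X^0(V)$ to this — and one has to check that such torsor maps do not create or destroy irreducible components, which follows because the total space of a Zariski-locally-trivial (or even just surjective with irreducible fibres) bundle over an irreducible base is irreducible, and the fibres here are $\mathrm{GL}_n$, which is irreducible. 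I would handle this either by explicitly writing $X^0(V)$ as fibred over an appropriate moduli-style space, invoking \cite{PoonenModuli} for the comparison between such moduli of algebras and Hilbert schemes (Remark~4.5 there, as flagged in the introduction), or by a direct argument that the map $T \mapsto \operatorname{Spec} A_T$ (with generators read off from $V$ via the multiplication, using that $V$ is generated over $\CC$ by... — actually this requires care, since $A_T$ need not be generated by few elements, so one genuinely uses all $n$ coordinates and the embedding $\operatorname{Spec} A_T \hookrightarrow \AA^n$ comes from $A_T^* \cong A_T \twoheadleftarrow \operatorname{Sym}(A_T^*) = \CC[\AA^n]$ via the form). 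The secondary obstacle is verifying the \emph{schematic} (not just set-theoretic) statement if one wants component counts with multiplicities or non-reduced structure to match — but since the theorem only claims equality of the \emph{number} of irreducible components, a bijection of the sets of components, with matching topology, suffices, which is reassuring.
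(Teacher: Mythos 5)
Your plan follows essentially the same route as the paper's proof: identify $X^0(V)$ with commutative unital associative algebra structures making the fixed form $\beta_0$ invariant, exploit the classical correspondence between nondegenerate invariant forms and the Gorenstein property, and navigate the difference between a fixed tensor space $S^3 V$ and a moduli problem (the Hilbert scheme) by carefully tracking the $\GL_n$- and $\OO(n)$-choices involved, using irreducibility of fibres to transfer component counts. The technical step you defer to Poonen's Remark~4.5 --- restricting to the locus $\HG_0$ where the variables $x_1,\dots,x_n$ map to a basis of $R/I$, and arguing this costs nothing at the level of components --- is indeed what the paper does (Lemma~\ref{lm:HG0}).

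There is one concrete pitfall your outline does not flag, and it is the crux of the bookkeeping you correctly anticipate as the ``main obstacle.'' The group of linear isomorphisms of a Gorenstein algebra $R/I$ with $(V,\beta_0)$ that carry a fixed socle form to $\beta_0$ is a torsor under the \emph{orthogonal} group $\OO(\beta_0)$, which is \emph{disconnected} (two components). A naive torsor argument with fibre $\OO(\beta_0)$ would therefore \emph{not} automatically preserve the number of irreducible components --- if the bundle happened to be topologically trivial along a component of $\HG$, you could double the count. The paper circumvents this by working with the \emph{projective} space of invariant forms $\PP(S^2 V^*)$ rather than $S^2 V^*$ itself, so that the relevant stabiliser becomes $\CC^* \cdot \OO(\beta_0)$, which \emph{is} irreducible (since $-1 \in \CC^*$); this is Lemma~\ref{lm:Z2}. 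Also, rather than constructing a single torsor over $\HG$, the paper splits the argument into two surjections going in opposite directions: $\GL(V) \times X^0(V) \twoheadrightarrow \HG_0$ gives ``at least as many components,'' and a variety $Z_2 \twoheadrightarrow X^0(V)$, fibred over $\HG_0 \times \PP(V^*)$ with irreducible fibre $\CC^*\cdot\OO(\beta_0)$, gives ``at most.'' Your phrase ``the fibres here are $\GL_n$'' is therefore not quite what happens in the step that needs the most care. If you pursue your plan, you will need to address the $\OO(V)$-disconnectedness explicitly; projectivising the form is a clean way to do it.
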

\begin{proof}
	See Section \ref{sec:XY2}.
\end{proof}

We can now make use of the following result on the irreducibility of Gorenstein loci of Hilbert schemes.

\begin{thm} [{\cite{Casnati15}}]
	The Gorenstein locus of $n$ points in $\AA^d_\CC$ 
	\begin{itemize}
		\item is irreducible if $n \leq 13$, or if $n=14$ and $d \leq 5$.
		\item has 2 irreducible components if $n=14$ and $d \geq 6$.
	\end{itemize}
\end{thm}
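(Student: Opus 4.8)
The plan is to reduce the statement to a classification of \emph{local} Artinian Gorenstein $\CC$-algebras of length at most $14$ and then to decide smoothability of each one. First I would record the standard reduction to the punctual case: a Gorenstein subscheme $Z\subseteq\AA^d_\CC$ of length $n$ is a disjoint union $Z=Z_1\sqcup\dots\sqcup Z_r$ of punctual Gorenstein schemes $Z_i$ of lengths $n_i$, and since the supports are disjoint the Gorenstein Hilbert scheme is, \'etale-locally around $[Z]$, a product $\prod_i\bigl(\HG_{d,n_i}\text{ near }[Z_i]\bigr)\times\AA^{d(r-1)}$. Because the smoothable component (the closure of the locus of $n$ distinct reduced points) is the unique component of $\HG_{d,n}$ through $[Z]$ as soon as each $Z_i$ is smoothable, this shows that $\HG_{d,n}$ is irreducible if and only if every punctual Gorenstein scheme of length $\le n$ and embedding dimension $\le d$ is smoothable. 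Everything thus reduces to understanding non-smoothable local Artinian Gorenstein algebras of length $\le 14$.

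Second, I would parametrise such algebras by Macaulay's inverse system: a local Artinian Gorenstein algebra $A=\CC[[x_1,\dots,x_d]]/I$ of socle degree $s$ equals $\CC[[x]]/\mathrm{Ann}(F)$ for a polynomial $F$ of degree $s$ in dual variables, unique up to scaling and formal coordinate change, and its Hilbert function $(h_0,\dots,h_s)$ is symmetric with $h_0=h_s=1$, $h_1=\dim_\CC\fm/\fm^2$ the embedding dimension, and $\sum_i h_i=n$. For $n\le14$ only finitely many such sequences occur, and the easy cases are disposed of at once: if $h_1\le2$ then $A$ is a complete intersection in two variables, hence smoothable; if $h_1=3$ then $A$ is a codimension-$3$ Gorenstein quotient, which has a Pfaffian resolution by Buchsbaum--Eisenbud and is smoothable; and if the socle degree is $2$, i.e.\ $h=(1,m,1)$, then $A$ is smoothable for every $m$. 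What remains are finitely many families with $h_1\in\{4,5,6\}$ and socle degree $\ge3$ --- dominated by the compressed socle-degree-$3$ algebras $h=(1,m,m,1)$ of length $2m+2$ --- the borderline being $h=(1,6,6,1)$, which has length $14$ and embedding dimension $6$.

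Third --- and this is the crux --- for the remaining families of length $\le13$ (and those of length $14$ with $d\le5$) I would prove smoothability by showing each algebra is a flat limit of \emph{cleavable} ones: from the inverse-system polynomial $F$ one constructs an explicit flat family over $\CC[t]$, a ``ray family'', with special fibre $\mathrm{Ann}(F)$ and general fibre a Gorenstein algebra of strictly smaller local length at the origin, the missing length reappearing at a nearby reduced point; induction on $n$ then concludes. This must be carried out across the (finite, or finitely parametrised) list of normal forms of $F$ in each small Hilbert-function family --- in particular for $(1,4,4,1)$ of length $10$ and $(1,5,5,1)$ of length $12$, which are \emph{not} covered by the codimension-$\le3$ results above. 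I expect this uniform construction of the deformations, and the verification that they are flat and genuinely split, to be the main obstacle; the bookkeeping over Hilbert functions is routine by comparison.

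Finally, for the exceptional family I would argue in the opposite direction. The locus $\mathcal{V}\subseteq\mathrm{Hilb}^{14}(\AA^d_\CC)$ of schemes with Hilbert function $(1,6,6,1)$ is irreducible and generically smooth, and a tangent-space (equivalently, parameter) count shows $\dim\mathcal{V}<14d$ precisely when $d\ge6$. Since every point of the $14d$-dimensional smoothable component has tangent space of dimension $\ge14d$, it follows that $\mathcal{V}$ lies on a component of $\mathrm{Hilb}^{14}(\AA^d_\CC)$ distinct from, hence not contained in, the smoothable one; this yields the second component of $\HG_{d,14}$ for $d\ge6$. For $d\le5$ the same count gives $\dim\mathcal{V}\le14d$, and an explicit smoothing then shows $\mathcal{V}$ is contained in the smoothable locus, so no new component appears. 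Both thresholds in the theorem --- $n\le13$ versus $n=14$, and within $n=14$ the jump at $d=6$ --- emerge from this one comparison.
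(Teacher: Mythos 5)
The first thing to say is that the paper does not prove this statement: it is imported wholesale from Casnati--Jelisiejew--Notari \cite{Casnati15}, so there is no in-paper argument to compare yours with. Your outline does track the strategy of that literature faithfully -- reduction to local Artinian Gorenstein algebras, Macaulay inverse systems, smoothability in embedding dimension $\leq 3$ via the Buchsbaum--Eisenbud structure theorem, smoothability of $(1,m,1)$, ray/cleaving degenerations for the remaining Hilbert functions, and a tangent-space count at a general algebra with Hilbert function $(1,6,6,1)$ to produce the second component for $n=14$, $d \geq 6$.

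But as a proof it is not complete, and two points are actually misstated. Your third step -- constructing, for every normal form of the inverse-system polynomial $F$ in every remaining stratum of length $\leq 14$ (embedding dimension $4$ and $5$, and, for length $14$ and $d \leq 5$, Hilbert functions such as $(1,5,5,2,1)$, $(1,4,4,4,1)$, etc.), a flat ray family whose general fibre cleaves -- is precisely the body of \cite{Casnati15} and its predecessors; you acknowledge it is the main obstacle but supply no construction, no flatness verification and no case list, so the theorem is not established by what you wrote. Next, your final paragraph handles $n=14$, $d\leq 5$ by ``the same count'' on the $(1,6,6,1)$ locus; but that locus is empty for $d \leq 5$, since an algebra with $h_1=6$ does not embed in $\AA^5_\CC$ -- what is actually required there is the smoothability of all the \emph{other} length-$14$ local Gorenstein algebras of embedding dimension $\leq 5$, i.e.\ again the deferred step three. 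Finally, for $d \geq 6$ the inequality you need is on the dimension of the tangent space to the Hilbert scheme at a general point $[Z]$ of the $(1,6,6,1)$ locus $\mathcal{V}$ (for $d=6$: $\dim T_{[Z]} = 76 < 84 = 14\cdot 6$), not merely $\dim \mathcal{V} < 14d$: a low-dimensional locus can perfectly well sit inside the boundary of the smoothable component. Your phrase ``$\mathcal{V}$ is generically smooth'' silently assumes exactly this tangent-space computation (due to Iarrobino--Emsalem and redone in \cite{Casnati15}), which is the one genuinely hard calculation in that half of the argument and is nowhere performed; one also needs the easy but necessary remark that the count for $d>6$ reduces to $d=6$ by adding trivial directions. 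So the skeleton is right, but the load-bearing steps are asserted rather than proved.
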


Combining the two previous theorems gives complete answer to Question \ref{que:Central2}.

\begin{cor}
	The locus $X^0(\CC^n)$ is irreducible and equal to $Y^0(\CC^n)$ for $n
	\leq 13$, and is not irreducible and not equal to $Y^0(\CC^n)$
	for $n \geq 14$.
\end{cor}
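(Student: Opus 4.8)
The plan is to deduce the corollary by stringing together the two cited theorems with the chain of observations established earlier in the paper. The target statement is: $X^0(\CC^n)$ is irreducible and equal to $Y^0(\CC^n)$ for $n \leq 13$, and is neither irreducible nor equal to $Y^0(\CC^n)$ for $n \geq 14$. First I would invoke Theorem~\ref{thm:XY2}, which identifies the number of irreducible components of $X^0(\CC^n)$ with the number of irreducible components of the Gorenstein locus $\HG$ of the Hilbert scheme of $n$ points in $\AA^n_\CC$. Then I would feed in the Casnati--Jelisiejew--Notari result \cite{Casnati15}: taking $d = n$, the Gorenstein locus of $n$ points in $\AA^n$ is irreducible for $n \leq 13$, and for $n = 14$ it has two components since $d = 14 \geq 6$; for $n \geq 15$ one notes $d = n \geq 6$ as well so it again has (at least) two components. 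Hence $X^0(\CC^n)$ is irreducible precisely when $n \leq 13$.

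Next I would handle the equality $X^0(\CC^n) = Y^0(\CC^n)$. Here I would recall (from Section~\ref{sec:Component}, following \cite[Lemma 3.7]{Robeva16Orthogonal}) that $Y(V)$ is an irreducible component of $X(V)$; intersecting with the open unital locus, $\overline{Y^0(V)} = Y(V)$ is a component of $X^0(V)$, and $Y^0(V)$ is dense in it, so $Y^0(V)$ is (the intersection with an open set of) one irreducible component of $X^0(V)$. For $n \leq 13$, since $X^0(\CC^n)$ is irreducible and $Y^0(\CC^n)$ is a nonempty Zariski-open subset of one of its components — in fact of the whole thing — we get $X^0(\CC^n) = Y^0(\CC^n)$ (more precisely, they have the same Zariski closure inside $X^0$, and since $Y^0$ is exactly the unital part of $Y$ while $X^0$ is the unital part of $X$, irreducibility forces them to coincide). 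For $n \geq 14$, $X^0(\CC^n)$ has a component other than $\overline{Y^0(\CC^n)}$; that component meets the open unital locus in a nonempty set not contained in $Y^0(\CC^n)$, so $X^0(\CC^n) \supsetneq Y^0(\CC^n)$, and in particular the two are unequal and $X^0(\CC^n)$ is reducible.

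The step I expect to require the most care is the precise matching of scheme-theoretic objects on the two sides of Theorem~\ref{thm:XY2}: one must be sure that the bijection between components of $X^0(\CC^n)$ and components of $\HG$ sends the odeco component $\overline{Y^0(\CC^n)}$ to the ``smoothable'' (reduced-points) component of $\HG$, so that the ``extra'' components on the Hilbert-scheme side genuinely correspond to loci of tensors \emph{not} in $Y^0$. This is what legitimises the inequality $X^0 \neq Y^0$ for $n \geq 14$ rather than merely reducibility; I would cite the explicit correspondence set up in the proof of Theorem~\ref{thm:XY2} in Section~\ref{sec:XY2}, under which weakly (indeed strongly) odeco tensors correspond to unions of reduced points, hence to the smoothable component. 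Everything else is a formal consequence of ``an irreducible variety equals any of its dense open subsets'' together with the component count, so no substantial new computation is needed.
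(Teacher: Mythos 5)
Your proof is correct and follows essentially the same route as the paper, which states the corollary as an immediate consequence of Theorem~\ref{thm:XY2} and the cited result from \cite{Casnati15} without further elaboration; your write-up just makes the glue explicit. Two remarks.

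First, the concern raised in your final paragraph is unnecessary. Since $Y(V)$ is irreducible and $Y^0(V)$ is a nonempty open subset of it (the unital locus is open), $Y^0(V)$ is itself irreducible. Moreover $Y(V)$ is an irreducible component of $X(V)$ (Section~\ref{sec:Component}), and intersecting with the open unital locus preserves the component structure, so $Y^0(V)$ is a single irreducible component of $X^0(V)$. Hence for $n \leq 13$, irreducibility of $X^0$ already forces $X^0 = Y^0$; and for $n \geq 14$, reducibility of $X^0$ alone gives $X^0 \neq Y^0$, with no need to track which component of $\HG$ is the smoothable one under the bijection of Theorem~\ref{thm:XY2}.

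Second, a small genuine gap — which the paper's terse formulation also glosses over — is the case $n \geq 15$. The Casnati--Jelisiejew--Notari statement as quoted only addresses $n \leq 14$, and your phrase ``for $n \geq 15$ one notes $d = n \geq 6$ as well'' misreads the hypothesis, which is $n = 14$, not $n \geq 14$. The standard way to fill this in is to observe that adjoining a reduced point (far away) to a nonsmoothable Gorenstein scheme of length $14$ in $\AA^{14} \subseteq \AA^n$ yields a nonsmoothable Gorenstein scheme of length $n$ in $\AA^n$, so $\HG$ is reducible for every $n \geq 14$. With that patch, the argument is complete.
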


\section{Decomposing (the algebra of) a tensor in $X(V)$}
\label{sec:Decomposition}

\subsection{Motivation}

Recall that if $B$ is a unital finite-dimensional algebra over $\CC$,
then $B$ is isomorphic to a product $B_1 \times \cdots \times B_k$
of local algebras; here $k=|\Spec(B)|$. In this section we
want to establish a similar decomposition for not
necessarily unital algebras.

\subsection{The unital/nilpotent decomposition}

\begin{prop} \label{prop:Decomposition}
Let $T \in X(V)$ and equip $V$ with the corresponding commutative,
associative multiplication $\mu_T$. Then $V$ has a unique
decomposition as direct sum
\[ V_1 \oplus \cdots \oplus V_k \oplus N \]
where the $V_i$ are nonzero and $N$ is potentially zero, such that $N$
and each $V_i$ is an ideal in $(V,\mu_T)$, $(V_i,\mu_T|_{V_i \times
V_i})$ is a local unital algebra for each $i$, and $N$ is a nilpotent
algebra. Furthermore, this unique direct sum decomposition is orthogonal. 

Accordingly, $T$ decomposes as $T_1 + \cdots T_k + T_N$ with
$T_i \in X(V_i)$ and $T_N \in X(N)$; and we have $T \in Y(V)$
if and only if $T_i \in Y(V_i)$ for all $i$ and $T_N \in Y(N)$.
\end{prop}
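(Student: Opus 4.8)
The plan is to derive the decomposition from the idempotent structure of the commutative associative algebra $A:=(V,\mu_T)$ (recall that $T\in X(V)$ means exactly that $\mu_T$ is associative) and then to read off the assertions about $T$ and about $Y(V)$; throughout I use that the invariant form $(.|.)$ on $A$ is non-degenerate. First I would set up the algebraic side. For any idempotent $e\in A$, associativity yields the Peirce decomposition $V=eV\oplus\ker L_e$ into two ideals; it is orthogonal, since $(ex\mid z)=(x\mid ez)=0$ for $z\in\ker L_e$, and $eV$ is unital with unit $e$. Moreover $L_e$ is itself an idempotent operator, because $L_e^2=L_{e^2}=L_e$, hence diagonalisable with eigenvalues in $\{0,1\}$. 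I will invoke the elementary fact that a finite-dimensional commutative associative $\CC$-algebra with no nonzero idempotent is nilpotent: otherwise it contains a non-nilpotent $y$, and the term at which the descending chain $\mathrm{span}(y^k,y^{k+1},\dots)$ stabilises is a nonzero unital subalgebra, whose unit is a nonzero idempotent.

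Now I pick a maximal family $e_1,\dots,e_k$ of pairwise orthogonal primitive idempotents of $A$ (they are linearly independent, so finitely many exist) and set $e:=e_1+\dots+e_k$, $V_i:=e_iV$, $N:=\ker L_e$; iterating the Peirce decomposition, together with the standard product decomposition of the unital Artinian algebra $eV$ into local algebras, gives the orthogonal direct sum of ideals $V=V_1\oplus\dots\oplus V_k\oplus N$. Each $V_i$ is local unital because primitivity of $e_i$ forbids nontrivial idempotents in $e_iV$; $N$ has no nonzero idempotent by maximality, hence is nilpotent; and $V_i\perp V_j$ for $i\neq j$, as well as $V_i\perp N$, again from invariance, e.g.\ $(x\mid y)=(e_ix\mid y)=(x\mid e_iy)=0$ for $x\in V_i$, $y\in V_j$. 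For uniqueness I would show that $eV$ is the largest unital ideal of $A$: if $I$ is a unital ideal with unit $u$, then $u-ue$ is an idempotent orthogonal to every $e_i$, hence $0$ by maximality, so $u=ue\in eV$ and $I=uI\subseteq eV$; thus $eV$ and $N=\ker L_e$ are intrinsic, and $eV=V_1\oplus\dots\oplus V_k$ is the unique decomposition of $eV$ into local factors. Conversely any decomposition as in the statement produces precisely such data, since the units of the $V_i$ are orthogonal primitive idempotents and the nilpotent $N$ has no nonzero idempotent.

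For the statements about $T$: since the summands are mutually orthogonal ideals, $(\mu_T(x,y)\mid z)$ vanishes unless $x,y,z$ lie in a common summand — if $x,y$ lie in different summands then $\mu_T(x,y)$ lies in their intersection, which is $0$, and if $x,y\in V_i$ but $z$ does not then $\mu_T(x,y)\in V_i$ is orthogonal to $z$ — so $T=T_1+\dots+T_k+T_N$ with $T_i\in S^3V_i$, $T_N\in S^3N$. Restricting the form, $\mu_{T_i}=\mu_T|_{V_i\times V_i}$ is associative, so $T_i\in X(V_i)$, and likewise $T_N\in X(N)$. For the equivalence involving $Y$, the direction ``$\Leftarrow$'' is quick: a sum of strongly odeco tensors supported on the pairwise orthogonal subspaces $V_1,\dots,V_k,N$ is again strongly odeco, so the linear summation map carries the product of the sets of strongly odeco tensors into $Y(V)$; as the Zariski closure of a product is the product of the closures and the map is continuous, it carries $Y(V_1)\times\dots\times Y(V_k)\times Y(N)$ into $Y(V)$, in particular $T\in Y(V)$.

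The hard part is ``$\Rightarrow$'', which I would prove by induction on the number of indecomposable summands; if there is at most one, $V$ is itself local unital or nilpotent and there is nothing to prove. Otherwise split off one unital block $V':=V_1$ with complementary ideal $V'':=V_2\oplus\dots\oplus V_k\oplus N=\ker L_{e_1}$; it suffices to show that $T\in Y(V)$ forces $T_1\in Y(V')$ and $T-T_1\in Y(V'')$, and then recurse on $V''$ with the tensor $T-T_1\in X(V'')$. Write $T=\lim_m S^{(m)}$ with $S^{(m)}$ strongly odeco (legitimate since $Y(V)$ is Zariski closed, hence Euclidean closed); then $\mu_{S^{(m)}}\to\mu_T$ because $\mu$ depends linearly on the tensor. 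The operator $2L_{e_1}-\mathrm{id}$ has eigenvalues $\pm1$, hence is invertible, so the implicit function theorem deforms the idempotent $e_1$ of $\mu_T$ to idempotents $e^{(m)}$ of the nearby algebras $\mu_{S^{(m)}}$ with $e^{(m)}\to e_1$. Each $L_{e^{(m)}}$ is again an idempotent operator, whose rank equals its integer-valued trace, which converges to $\mathrm{rank}(L_{e_1})$ and is therefore eventually constant; hence the Peirce subspaces $\mathrm{im}\,L_{e^{(m)}}$ and $\ker L_{e^{(m)}}$, which are orthogonal ideals of $\mu_{S^{(m)}}$ by Proposition~\ref{prop:Associative} and the Peirce argument above, converge in the Grassmannian to $V'$ and $V''$. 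A short computation shows that the algebra of a strongly odeco tensor $\sum_a v_a^{\otimes 3}$ is the product of the one-dimensional algebras $\CC\cdot v_a/(v_a\mid v_a)$ with the zero algebra on $(\mathrm{span}\{v_a\})^\perp$; its ideals are therefore ``coordinate-type'', so the orthogonal projection of $S^{(m)}$ onto either Peirce subspace is again strongly odeco, and applying $S^3 L_{e^{(m)}}\to S^3 L_{e_1}$ these two pieces converge to $T_1$ and $T-T_1$. Finally, a Euclidean limit of strongly odeco tensors supported on subspaces $W_m$ converging to a subspace $W_\infty$ of the same dimension lies in $Y(W_\infty)$: one transports the $W_m$ back to $W_\infty$ by isometries $g_m\in O(V)$ tending to the identity, which exist because $O(V)$ acts transitively with local holomorphic sections on the non-degenerate subspaces of a fixed dimension. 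This gives $T_1\in Y(V')$ and $T-T_1\in Y(V'')$ and closes the induction. The genuine obstacle is precisely here: there is no direct way to project $Y(V)$ onto $Y(V_i)$, because the orthogonal projection onto $V_i$ is not an algebra homomorphism for the tensors approximating $T$, so one is forced to track the canonical decomposition through the limit by deforming idempotents and controlling the resulting moving Peirce subspaces.
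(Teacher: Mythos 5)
Your proof is correct, and it diverges from the paper in two ways that are worth noting. For the algebraic decomposition you start from a maximal family of pairwise orthogonal primitive idempotents and let $N=\ker L_e$ with $e$ their sum, invoking the Peirce decomposition; the paper instead takes a non-nilpotent $x$, stabilises $\im L_x^m$, and uses $y=x^m$ to carve off a unital ideal $yV$ by Lemma~\ref{lm:Unit}, then recurses on $\ker L_y$. Both are standard routes to the same structure theorem; yours is a bit more global (produce all primitive idempotents at once), the paper's a bit more inductive, but there is no essential difference and your uniqueness argument (the unital ideal $eV$ is the largest unital ideal, hence intrinsic; $N$ is then forced by dimension; decomposition of $eV$ into local factors is the classical one) is sound.

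The second difference is more substantial. For the final assertion the paper only says that the statements about $T$ are ``straightforward,'' but the direction $T\in Y(V)\Rightarrow T_i\in Y(V_i),\ T_N\in Y(N)$ is genuinely not immediate: as you correctly observe, the orthogonal projection $S^3V\to S^3V_i$ does not carry strongly odeco tensors to strongly odeco tensors, so one cannot simply project a limiting sequence. Your argument --- deform the idempotent $e_1$ of $\mu_T$ to idempotents $e^{(m)}$ of the approximating odeco algebras via the implicit function theorem (valid because $2L_{e_1}-\id$ is invertible), use that $L_{e^{(m)}}$ is the orthogonal projection onto an ideal whose rank is eventually constant and which converges to $V_1$ in the Grassmannian, note that an idempotent of a strongly odeco algebra $\sum_a v_a^{\otimes 3}$ must be $\sum_a c_a v_a/(v_a|v_a)$ with $c_a\in\{0,1\}$ so that the pushed-forward tensor stays strongly odeco, and finally transport the moving subspaces back to $V_1$ by isometries $g_m\to\id$ in $O(V)$ --- is a correct and complete way to fill this in, and in fact supplies a proof the paper does not. (It is perhaps worth remarking that the paper only ever uses the easy direction ``$\Leftarrow$'' of this equivalence, e.g.\ in Section~\ref{sec:Nilpotent}, which is why the gap is harmless for the paper's main results; but as a proof of the proposition as stated, your more careful treatment is needed.) The ``$\Leftarrow$'' direction via the linear summation map and the fact that Zariski closure commutes with products is also correct.
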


\begin{proof}
Note that if $x,y \in V$ belong to different factors in any decomposition
of $V$ as a direct sum of ideals, and if the first factor has unit
element $e$, then
\[ (a|b)=(ae|b)=(e|ab)=(e|0)=0; \]
this proves the orthogonality of the decomposition. 

That a {\em unital} finite-dimensional commutative, associative
algebra $V$ has a unique product decomposition into local algebras is
well-known---it is found by taking a decomposition of $1$ into minimal
idempotents $e_i$ satisfying $e_ie_j=\delta_{ij} e_i$ and taking
$V_i:=e_i V$.

To reduce to the unital case, we proceed as follows. If $V$ is
nilpotent, then we set $k:=0$ and $N:=V$. Otherwise, there exists
an element $x \in V$ that is not nilpotent. Let $L_x:V \to V$
be multiplication with $x$. Then there exists an $m$ such that $\im
L_x^m=\im L_x^{m+1}=\ldots$. Set $y:=x^m$ and $W:=yV$. Then $(L_y)|_{yV}$
is invertible, so the ideal $yV$ is unital by Lemma~\ref{lm:Unit}. On the
other hand, we have $V=yV \oplus \ker(L_y)$: indeed, the dimensions of
$yV=\im(L_y),\ker(L_y)$ add up to $n$, and if $L_y(yv)=0$, then $L_y^2
v=0$, so already $L_y v=0$, so $yv=0$. 

Now $\ker L_y$ has strictly
lower dimension than $V$, and by induction we know that $\ker L_y$ is
the direct sum of a nilpotent ideal $N$ and an ideal $I$ that is unital
as an algebra. Then so is $V$: it equals the direct sum of the ideals
$N$ and $yV \oplus I$, where the latter is unital as an algebra. The
decomposition of $V$ into a unital ideal $V_0$ and a nilpotent ideal
$V_1$ is unique, because $V_0$ is the space of elements $v \in V$ for
which there is an idempotent $e \in V$ with $v \in L_e$.

The statements about $T$ are straightforward from the fact that $\mu_T$
is the sum of its restrictions to the ideals $V_i$ and $N$.  Here we
note that the restrictions of $(.|.)$ to the $V_i$ and to $N$ are
non-degenerate, so that the notation $X(V_i),X(N),Y(V_i),Y(N)$ make sense.

\section{$Y(V)$ is a component of $X(V)$}
\label{sec:Component}

\subsection{Motivation}

While, as we will see, $Y(V)$ is in general not {\em equal} to $X(V)$,
at least it is an irreducible component. This was already observed in
\cite{Robeva16Orthogonal}; we paraphrase the argument here.

\subsection{A tangent space computation}

Here, and later in the paper, we will write $e_1,\ldots,e_n$ for the
standard basis of $\CC^n$.

\begin{prop}
For each $V$ equipped with a nondegenerate symmetric
bilinear form, the variety $Y(V)$ of limits of strongly odeco tensors 
is an irreducible component of the variety $X(V)$ defined by
Robeva's quadrics. 
\end{prop}

\begin{proof}
	It suffices to prove that for a suitable tensor $T_0 \in Y(V)$,
	the tangent space to $X(V)$ at $T_0$ has dimension equal to
	$\dim(Y(V))=\binom{n+1}{2}$. We take $V=\CC^n$ and $T_0=E=\sum_{i=1}^n
	e_i^{\otimes 3}$. 
	
	Let us first write Robeva's equations in coordinates: writing $T=\sum_{i,j,k}T_{ijk}e_i \otimes e_j \otimes e_k$, equation (\ref{eq:Robevaxyzw}) becomes
	\begin{equation} \label{eq:RobevaCoords}
	\sum_{r}T_{jkr}T_{i\ell r} = \sum_{r}T_{ijr}T_{k\ell r}
	\end{equation}
	
	The equations defining the tangent space at $E$ are given by substituting $T=E+\varepsilon X$ in (\ref{eq:RobevaCoords}):
	\begin{equation} \label{eq:tangentSpace}
	\delta_{jk}X_{ij\ell} + \delta_{i\ell}X_{ijk} = \delta_{ij}X_{ik\ell} + \delta_{k\ell}X_{ijk}.
	\end{equation}
	
	By taking $i=\ell \neq j \neq k \neq i$ in (\ref{eq:tangentSpace}) we find that $X_{ijk}=0$ for $i,j,k$ pairwise distinct, and by taking $i=\ell \neq j = k$ we find that $X_{iij}=-X_{ijj}$ for all $i \neq j$. But this implies that our tangent space has dimension at most $n+\binom{n}{2}=\binom{n+1}{2}$.	
\end{proof}

\begin{re}
In a similar manner, one finds that all strongly odeco
tensors of tensor rank $n$ are smooth points of $Y(V)$.
\end{re}

\section{Many weakly odeco tensors} \label{sec:Many}

\subsection{Motivation}
In this section, we give our first negative answer to
Question~\ref{que:Central} by showing that, for $n$ sufficiently
large, there are many more weakly odeco tensors than strongly odeco
tensors. 

\subsection{Weakly odeco tensors from isotropic
spaces}

Recall that $V$ is a complex vector space of dimension $n$
equipped with a symmetric bilinear form $(.|.)$.

\begin{prop} \label{prop:ZV}
The variety $X(V)$ contains the union over all (maximal) isotropic subspaces
$U \subseteq V$ of $S^3 U$. This union is an affine variety $Z(V)
\subseteq S^3 V$ of dimension
\[ \binom{\lfloor n/2 \rfloor + 2}{3} + \binom{\lceil n/2 \rceil}{2}. \]
\end{prop}

\begin{proof}
For the first statement, note that if $u_1,\ldots,u_k$ are elements
of an isotropic subspace $U$ of $V$, then $\sum_i u_i^{\otimes 3}$
is weakly odeco, hence in $X(V)$ by Proposition~\ref{prop:Associative}.

There is no harm in restricting our attention to maximal
isotropic subspaces, i.e., those of dimension $\lfloor n/2
\rfloor$. Hence $Z(V)$ is the projection of the incidence variety
\[ \{(U,T) \in \Griso(\lfloor n/2 \rfloor,V) \times S^3 V
\mid T \in S^3 U\} \]
onto the second factor. Since the isotropic Grassmannian is a projective
variety, $Z(V)$ is closed. Furthermore, for $U \subseteq V$ isotropic
of dimension $\lfloor n/2 \rfloor$ and $T \in S^3 U$ {\em concise},
i.e. such that the associated linear map $S^2 U^* \to U$ is surjective,
the fibre over $T$ is the single point $(U,T)$, hence $\dim(Z(V))$ equals
the dimension of the isotropic Grassmannian, which is the second term
above, plus the dimension of $S^3 U$ for a fixed isotropic $U \subseteq V$
of dimension $\lfloor n/2 \rfloor$, which is the first term.
\end{proof}

\begin{re}
Clearly, $\dim(Z(V))$ grows as a cubic (quasi-)polynomial in $n$, whereas
$\dim(Y(V))$ is a quadratic polynomial in $n$. 
Since $X(V) \supseteq Z(V)$, this shows that $X(V) \supsetneq Y(V)$ for all $V$ of sufficiently high dimension. 
In fact, $\dim(Z(V))>\dim(X(V))$ for $n \geq 16$.  However,
we will show with an explicit example that $X(V) \supsetneq Y(V)$ holds
already for $n \geq 12$ (and possibly already for smaller $n$).
\end{re}

\begin{re}
	The variety $Z(V)$ consists precisely of the tensors $T \in S^3V$ whose algebra is $2$-step nilpotent:
	\[
	T \in Z(V) \iff \mu_T(x,\mu_T(y,z)) = 0 \quad \forall x,y,z \in V.
	\]
	One implication is clear: if $T \in Z(V)$, we can write $T=\sum_i{u_i^{\otimes 3}}$ with the $u_i$ isotropic and pairwise orthogonal, and the computation from the proof of Proposition \ref{prop:Associative} gives that $\mu_T(x,\mu_T(y,z)) = 0$. For the other direction we can work in coordinates: write $V=\CC^n$, then the condition 
	\[
	\mu_T(e_i,\mu_T(e_j,e_k))=0 \quad \forall i,j,k \in \{1,\ldots,n\}
	\]
	is equivalent to
	\[
	\sum_{r=1}^n{T_{ijr}T_{k\ell r}} = 0 \quad \forall i,j,k, \ell \in \{1,\ldots,n\}.
	\]
	But this means that that the space $U$ spanned by the columns of $T$ is isotropic.
\end{re}

\section{Unitalisation and de-unitalisation} \label{sec:Unitalisation}

\subsection{Motivation}

It is well known that if an associative algebra $A$, say over $\CC$,
has no multiplicative unit element, then one can turn $A$ into a unital
associative algebra by setting $A':=\CC 1 \oplus A$ and extending the
multiplication on $A$ to $A'$ via $1 a':=a'$ for all $a' \in A'$. In
this section, we describe a process that also extends an invariant
bilinear form.

\subsection{Unitalising algebras with invariant forms} 

Let $A$ be an associative algebra over $C$ equipped with a bilinear form
$(.|.)$ such that $(ab|c)=(a|bc)$ for all $a,b,c \in A$. We
do not require $A$ to be commutative or $(.|.)$ to be symmetric. 

We construct a new algebra
\[ \widetilde{A}:=\CC 1 \oplus A \oplus \CC y \]
with multiplication determined by 
\begin{align*}
&1*x:=x \text{ for all } x \in \widetilde{A},\\
&a*a':=aa'+(a|a')y \text{ for all } a,a' \in A,\\
&a*y:=0, y*a=0  \text{ for all } a \in A, \text{ and}\\
&y*y:=0. 
\end{align*}
We also extend the form $(.|.)$ to $\widetilde{A}$ by requiring that 
\[ (1|1)=(a|1)=(1|a)=(a|y)=(y|a)=(y|y)=0 \text{ for all } a \in A \]
and $(1|y)=(y|1)=1$.

\begin{re}
Let us consider the special case where the multiplication on $A$ is identically zero. If we let $a_1,\ldots,a_n$ be an orthonormal basis of $A$, then the tensor associated to $\tilde{A}$ is equal to
\[
\sum_{i=1}^n{(1 \otimes a_i \otimes a_i +a_i \otimes 1 \otimes a_i + a_i \otimes a_i \otimes 1)} + 1 \otimes 1 \otimes y + 1 \otimes y \otimes 1 + y \otimes 1 \otimes 1. 
\]

This tensor is known as the Coppersmith-Winograd tensor \cite{CoppersmithWinograd}; it has played in central role in the literature on the complexity of matrix multiplication. We refer the reader to \cite{LandsbergComplexity} (in particular Chapter 3.4.9) for an overview. In the notation of the latter reference, the above tensor is denoted $T_{n,CW}$.
\end{re}

\begin{prop}
The algebra $\widetilde{A}$ is associative, and the form $(.|.)$ on
$\widetilde{A}$ is invariant. Furthermore, if $(.|.)$ is nondegenerate or
symmetric on $A$, then its extension to $\widetilde{A}$ has the same property;
and if $A$ is commutative and $(.|.)$ is symmetric, then $\widetilde{A}$
is commutative.
\end{prop}

\begin{proof}
It suffices to prove the identity $a*(b*c)=(a*b)*c$ for $a,b,c$ ranging
over a spanning set of $\widetilde{A}$. If at least one of $a,b,c$ is $1$, then the
identity is immediate. If none of them is $1$ and at least one of them is $y$,
then both sides are zero. So the interesting case is the
case where $a,b,c$ are all in $A$. Then we have
\[ a*(b*c)=a*(bc+(b|c)y)=a(bc)+(a|bc)y+0=a(bc)+(a|bc)y \]
and
\[ (a*b)*c=(ab+(a|b)y)*c=(ab)c+(ab|c)y+0=(ab)c+(ab|c)y. \]
These two expressions are equal by associativity of $A$ and
invariance of $(.|.)$ on $A$. 

Now we turn to the identity $(a*b|c)=(a|b*c)$ for $a,b,c$
ranging over the same spanning set. If $b=1$, then the identity
is immediate. If $b \in A \oplus \CC y$ and $a=1$, then the identity reads
\[ (b|c)=(1|b*c). \]
Now the right-hand side is the coefficient of $y$ in $b*c$.  Write
$b=b'+\beta y$ and $c=\gamma 1 + c' + \delta y$ with $\beta,\gamma,\delta
\in \CC$ and $b',c' \in A$. Then the coefficient of $y$ in $b*c$ equals
$\beta \gamma + (b'|c')$, and this also equals $(b|c)$.
Since $a,c$ play symmetric roles, the identity also holds
when $c=1$. So we are left with the case where $a,b,c \in A
\oplus \CC y$. But then, since $y$ is perpendicular to $A$,
we have 
\[ (a*b|c)=(ab|c)=(a|bc)=(a|b*c), \]
as desired. 

That the extension of $(.|.)$ inherits the properties of symmetry
and non-degeneracy is immediate, and so is the statement about the
commutativity of $\widetilde{A}$.
\end{proof}

\begin{re}
The space $M:=A \oplus \CC y$ is a maximal ideal in $\widetilde{A}$,
and in particular a non-unital subalgebra of $\widetilde{A}$. This
subalgebra has an ideal $\CC y$, and the natural map $A \to M/\CC y$
is an isomorphism of algebras.  We will use this construction below to
{\em de-unitalise} a Gorenstein local algebra in a canonical manner.
\end{re}

\begin{lm}
Suppose that $A$ is commutative and that $(.|.)$ is symmetric. Then $A$
is nilpotent if and only if $\widetilde{A}$ is (unital and) local.
\end{lm}

\begin{proof}
If $A$ is nilpotent, then $M$ consists of elements that are nilpotent in
$\widetilde{A}$, and hence any element not in $M$ is invertible. Conversely,
if $\widetilde{A}$ is local, then $M$ is the unique maximal ideal and its
elements are nilpotent. This implies that $A \cong M/\CC y$
is nilpotent. 
\end{proof}

We now show that each local, unital algebra with an
invariant bilinear form arises as $\widetilde{A}$ for some $A$
equipped with a bilinear form.

\begin{prop} \label{prop:DeUnitalisation}
Let $B$ be a commutative, local, unital algebra of dimension at least $2$
equipped with a nondegenerate invariant symmetric bilinear form. Then
$B \cong \widetilde{A}$ for some nilpotent algebra $A$ equipped with a
nondegenerate invariant symmetric bilinear form.
\end{prop}

\begin{proof}
Let $M$ be the maximal ideal of $B$, and let $d$ be maximal such that
$M^d$ is nonzero. Then $d \geq 1$ since $\dim(B) \geq 2$.

We claim that $M^d$ is one-dimensional. Indeed, if it were at least
two-dimensional, then $1^\perp \cap M$ would contain a nonzero element
$x$. This element would satisfy $(x|1)=0$ and $(x|z)=(xz|1)=(0|1)=0$
for all $z \in M$, contradicting the non-degeneracy of $(.|.)$.

Choose a spanning vector $z \in M^d$. Then $(1|z) \neq 1$, and hence we
may replace $z$ by a (unique) scalar multiple with $(1|z)=1$.
Furthermore, $z^\perp=M$. We define $A$ as the algebra $M/\CC z$ equipped with
the induced symmetric bilinear form. We claim that $\widetilde{A} \cong B$
as algebras with bilinear forms. The isomorphism $\phi:\widetilde{A} \to B$
sends $1 \in \widetilde{A}$ to $1 \in B$, $y \in \widetilde{A}$ to $z \in B$ and $m \in A$ to the unique element $m' \in m+\CC z \subseteq B$ that
satisfies $(1|m')=0$ in $B$. All checks are then straightforward.
\end{proof}

Now let $V$ be an $n$-dimensional complex vector space equipped with a
nondegenerate symmetric bilinear form $(.|.)$. Define $\widetilde{V}:=\CC
1 \oplus V \oplus \CC y$, equipped with the symmetric bilinear form
as above.

Let $T \in X(V)$ and let $V=V_1 \oplus \cdots \oplus V_k \oplus N$
be the decomposition of Proposition~\ref{prop:Decomposition}. Let
$e_i$ be the unit element in $V_i$. Now $\widetilde{V}=\CC 1
\oplus V \oplus \CC y$ is also a
commutative, associative algebra with invariant symmetric bilinear
form $(.|.)$, hence it corresponds to an element $\widetilde{T} \in
X(\widetilde{V})$, which in turn gives a decomposition of $\widetilde{V}$ as in
Proposition~\ref{prop:Decomposition}. The following proposition expresses
the latter decomposition into the former.

\begin{prop} \label{prop:DecompTilde}
We have an orthogonal decomposition 
\[ \widetilde{V}=V_1' \oplus \cdots \oplus V_k' \oplus N' \oplus 0\]
into ideals, where $V_i' \subseteq \widetilde{V}$ is isomorphic
to $V_i$ via the isomorphism 
\[ \phi_i:V_i  \to V_i',\ \phi_i(v):=v+(v|e_i)y \]
and where $N'$ is a local unital algebra spanned by $N$,
$y$, and the unit element
$e_{k+1}:=1-e_1-\cdots-e_k$.
\end{prop}

\begin{proof}
First, $\phi_i$ is clearly injective. It is also an algebra
homomorphism because
\begin{align*} 
&\phi_i(vw)=vw+(vw|e_i)y=vw+(v|we_i)y \\
&=vw+(v|w)y=(v+(v|e_i)y)*(w+(w|e_i)y)=\phi_i(v)*\phi_i(w).
\end{align*}
Now note that if $v,w$ belong to $V_i \neq V_j$,
respectively, then 
\[ \phi_i(v)*\phi_j(w)=(v+(v|e_i)y)*(w+(w|e_j)y)
=vw + (v|w)y = 0. \]
This shows that $V_i'*V_j'=\{0\}$. Similarly, we have $N'*V_i=\{0\}$
for all $i$---e.g., for $v \in V_i$ we have
\[ e_{k+1}*\phi_i(v)=(1-e_1-\cdots-e_k)*(v+(v|e_i)y)
= v+(v|e_i)y-(e_iv+(e_i|v)y)=0. \]
Finally, $e_{k+1}$ is clearly a unit element in $N'$. Indeed, we even
have an isomorphism $\widetilde{N} \to N'$ of unital algebras with invariant
bilinear forms that sends $1$ to $1-e_1-\cdots-e_k$ and $y$ to $y$.
\end{proof}

\begin{prop} \label{prop:UnitalisationMorphism}
The map $T \mapsto \widetilde{T}$ is a morphism from $X(V)$ into
$X(\widetilde{V})$ that maps $Y(V)$ into $Y(\widetilde{V})$.
\end{prop}

We call this morphism the {\em unitalisation morphism}.

\begin{proof}
The first statement is immediate: the algebra structure on $\widetilde{V}$
depends in a polynomial manner on the algebra structure on $V$. For the
last statement, we note that if $T$ is strongly odeco of tensor rank $n$,
then $(V,\mu_T)$ is an orthogonal direct sum of $n$ one-dimensional unital
ideals. By Proposition~\ref{prop:DecompTilde},
$(\widetilde{V},\mu_{\widetilde{T}})$ is then an orthogonal direct sum of $n$
one-dimensional unital ideals and one two-dimensional ideal
which, as an algebra with symmetric bilinear form, is
isomorphic to $\CC[y]/(y^2)$ with the blinear form
determined by $(1|y)=1$. The latter corresponds to the
tensor 
\[ S:=y \otimes y \otimes 1 + y \otimes 1 \otimes y + 1
\otimes y \otimes y \]
from Example~\ref{ex:DimTwo}, hence it is a limit of strongly
odeco tensors. Consequently, by Proposition~\ref{prop:Decomposition},
$\widetilde{T}$ is in $Y(\widetilde{V})$. Since the map $T \mapsto \widetilde{T}$
maps the dense subset of $Y(V)$ of strongly odeco tensors of rank $n$
into $Y(\widetilde{V})$, it maps $Y(V)$ into $Y(\widetilde{V})$.
\end{proof}

\begin{re}
Unfortunately, we see no reason why, if $T \in X(V)$ satisfies $\widetilde{T}
\in Y(\widetilde{V})$, $T$ should be in $Y(V)$. Indeed, the assumption says
that $\widetilde{T}$ is a limit of sums with $n+2$ pairwise orthogonal terms,
and we do not see a natural construction that shows that $T$ is a limit of
sums with $n$ pairwise orthogonal terms; we do not have a
counterexample, though.
\end{re}

\section{Nilpotent counterexamples} \label{sec:Nilpotent}

\subsection{Motivation}
When one studies the Hilbert scheme of $n$ points in a fixed
space for increasing $n$, and $n$ is taken minimal such that
the scheme has more than one irreducible component, then all
components other than the main component parameterise
subschemes supported in a single point. We will establish a
similar result here. 

\subsection{First counterexamples are nilpotent}

\begin{thm}
Let $n=\dim(V)$ be minimal such that $X(V) \neq Y(V)$. Then for all $T
\in X(V) \setminus Y(V)$ the algebra $(V,\mu_T)$ is nilpotent.
\end{thm}

\begin{proof}
Let $T \in X(V) \setminus Y(V)$. Decompose $V=V_1 \oplus \cdots \oplus V_k
\oplus N$ as in Proposition~\ref{prop:Decomposition}, and decompose $T=T_1
+ \cdots +T_k + T_N$ accordingly. By Proposition~\ref{prop:Decomposition},
either some $T_i$ does not lie in $Y(V_i)$, or $T_N$ does not lie
in $Y(N)$. By minimality of $n$, we find that either $k=0$ and
we are done, or else $k=1$ and $N=\{0\}$. In the latter case, by
Proposition~\ref{prop:DeUnitalisation}, the algebra $(V,\mu_T)$ equals
$\widetilde{A}$ for some nilpotent algebra $A$ of dimension $\dim(V)-2$
equipped with a nondegenerate symmetric bilinear form. This means that
$T=\widetilde{S}$ for some tensor $S \in X(A)$.  By minimality of $n$, $S$
lies in $Y(A)$. But then, by Proposition~\ref{prop:UnitalisationMorphism},
$T=\widetilde{S}$ lies in $Y(V)$, a contradiction. Hence $(V,\mu_T)$ is
nilpotent, as claimed.
\end{proof}

\section{Proof of Theorem~\ref{thm:XY1}} \label{sec:XY1}

Let $V$ be a finite-dimensional complex vector space of dimension $n$
and let $(.|.)$ be a nondegenerate symmetric bilinear form on $V$. We
first show that $X(V)$ is not equal to $Y(V)$ when $n=12$.

In \cite{VSPsOfCubic:Jelisiejew} an explicit $14$-dimensional local Gorenstein algebra is
constructed which is not smoothable. Call this algebra $B$, and let
$(.|.)$ be a nondegenerate invariant symmetric bilinear form on $B$. Let
$M$ be the maximal ideal of $B$, and let $M^d$ be its minimal ideal.
Then $A:=M/M^d$ is a nilpotent algebra and since $M^d$ is the radical
of the restriction of $(.|.)$ to $M$, $(.|.)$ induces a nondegenerate
bilinear form on $A$. Note that $\dim(A)=12$, so we may assume that $V$
(with its bilinear form) is the underlying vector space of $A$ (with
its bilinear form). Let $T \in X(V)$ be the tensor corresponding to
the algebra $A$. We claim that $T$ does not lie in $Y(V)$.  Indeed,
if it does, then $T=\lim_{i \to \infty} T_i$ for a convergent sequence
of strongly odeco tensors $T_i$. Applying the unitalisation morphism,
we obtain $\widetilde{T}=\lim_{i \to \infty} \widetilde{T_i}$. Now
$\widetilde{T}$ is the structure tensor of the algebra $\tilde{A}$, which
by (the proof of) Proposition~\ref{prop:DeUnitalisation} is isomorphic
to $B$.

However, by (the proof of) Proposition~\ref{prop:UnitalisationMorphism},
each $\widetilde{T}_i$ is the direct product of $12$ one-dimensional
ideals and one copy of $\CC[x]/(x^2)$. In particular, each
$\widetilde{T}_i$ corresponds to a smoothable algebra, and
$B$ is smoothable, as well. This
contradicts the choice of $B$.
\end{proof}

\section{Proof of Theorem~\ref{thm:XY2}} \label{sec:XY2}

We set $V:=\CC^n$, equipped with the standard symmetric bilinear form
$\beta_0(u,v):=\sum_i u_i v_i$. Recall that $X^0(V)$ is the variety of
tensors corresponding to {\em unital} 
associative algebras on $V$ for which $\beta_0$ is invariant. We want
to show that $X^0(V)$ has the same number of irreducible components as
$\HG$.

\subsection{Locating the unit element}

\begin{lm} \label{lm:Unit2}
The map $u:X^0(V) \to V$ that assigns to a tensor $T$ the unit element
of $(V,\mu_T)$ is a morphism of quasi-affine varieties.
\end{lm}

\begin{proof}
For given $T$, the unit element $u=u(T)$ is the solution to the system
of linear equations $\mu_T(u,e_i)=e_i$ for $i=1,\ldots,n$. For each $T
\in X^0(V)$, this system has a unique solution. This means that we can
cover $X^0(V)$ with open affine subsets in which some subdeterminant
of the coefficient matrix has nonzero determinant, and on such an open
subset the map $u$ is morphism with a formula in which that determinant
appears in the denominator. These morphisms glue to a global morphism $u$.
\end{proof}

\subsection{A map from $X^0(V)$ to the Hilbert scheme}

We write $R:=\CC[x_1,\ldots,x_n]$, denote by $\cH$ the Hilbert scheme
of $n$ points in $\AA^n$, and by $\HG$ the open subscheme of $\cH$
parameterising Gorenstein schemes. In fact, since we care only about
irreducible components, we may and will replace both of these by the
corresponding reduced subvarieties, and we will only speak of $\CC$-valued
points of these varieties. Points in $\cH$ will be regarded as ideals in
$R$ of codimension $n$. To define a morphism from an affine variety $B$
over $\CC$ to $\cH$, it suffices to indicate a subscheme of $B \times
\AA^n$ (product over $\CC$), flat over $B$, such that the fibre over
each $b \in B$ is defined by such a codimension-$n$ ideal.

Take $B=X^0(V)$. A tensor $T \in X^0(V)$ gives rise to the ideal
$I_T:=\ker(\phi_T)$, where $\phi_T:K[x_1,\ldots,x_n] \to (V,\mu_T)$
is the kernel of the homomorphism of associative algebras that maps
$x_i$ to $e_i$ and $1$ to the unit element $u(T)$ from
Lemma~\ref{lm:Unit2}. The ideals $I_T$ have vector space
codimension $n$ in $T$ and together define a subscheme of $X^0(V) \times
\AA^n$ flat over $X^0(V)$. Hence we have described a morphism $\Phi:X^0(V)
\to \cH$. Since any algebra corresponding to a tensor in $X^0(V)$
has a nondegenerate invariant bilinear form, $\Phi(T) \cong (V,\mu_T)$
is Gorenstein for each $T \in X^0(V)$, so $\Phi$ is a morphism $X^0(V)
\to \HG$.  We want to use $\Phi$ to compare irreducible components of
$\HG$ and $X^0(V)$.  However, the map $\Phi$ is not an isomorphism,
so some care is needed for this.  We first describe the image of
$\Phi$; the following is immediate.

\begin{lm} \label{lm:ImPhi}
The image of $\Phi$ consists of all codimension-$n$ ideals $I \in \HG$
such that $x_1,\ldots,x_n \in R$ map to a basis of $R/I$ and moreover
the bilinear form on $R/I$ for which this basis is orthonormal is
invariant for the multiplication in $R/I$. \hfill \qed
\end{lm}

The following lemma shows that, as far as irreducible components are
concerned, it is no real restriction to consider ideals $I$ modulo which
$x_1,\ldots,x_n$ is a basis. 

\begin{lm} \label{lm:HG0}
The locus $\HG_0$ in $\HG$ of ideals $I$ in $R$ for which $x_1,\ldots,x_n$
maps to a vector space basis of $R/I$ is open and dense in $\HG_0$.
Consequently, $\HG_0$ has the same number of irreducible components as
$\HG$. 
\end{lm}

This is well-known to the experts---see \cite[Remark
4.5]{PoonenModuli}---but we include a quick proof.

\begin{proof}
The condition on $I$ can be expressed by the non-vanishing of certain
determinants; this shows that $\HG_0$ is open. For density, suppose
that some component $C$ of $\HG$ does not meet $\HG_0$, and let $I_0$
be a point in $C$ such that the image of $\langle x_1,\ldots,x_n
\rangle_\CC$ in $R/I_0$ has maximal dimension, say $m<n$. 
After a linear change of coordinates (which preserves
all components of $\HG$ and hence in particular $C$), we may assume that $x_{m+1},\ldots,x_n$ are
in $I_0$, and there exists a monomial $r$ in $x_1,\ldots,x_m$ of degree
$\neq 1$ such that $x_1,\ldots,x_m,r$ are linearly independent in $R/I_0$.

Now, for $a \in \CC$, consider the nonlinear automorphism $\psi_a:R \to R$
that maps all $x_i, i \neq n$ to themselves but $x_n$ to $x_n+a\cdot r$. The
map $(a,I) \mapsto \psi_a^{-1}(I)$ defines an action of the additive
group $(\CC,+)$ on $\HG$, and since the additive group is irreducible,
this action preserves all components of $\HG$. Since, for $a \neq 0$,
$x_1,\ldots,x_m,x_n+a r$ are linearly independent modulo $I_0$, their
pre-images $x_1,\ldots,x_m,x_n$ under $\psi_a$ are linearly independent
modulo $I_a:=\psi_a^{-1}(I_0)$. Since $I_a$ is in $C$, this contradicts
the maximality assumption in the choice of $I_0$.

The last statement is now immediate.
\end{proof}

\subsection{Varieties $Z_2 \to Z_1 \to \HG_0$ with the same number of
components}

In what follows, we will identify $V$ with the space in $R$ spanned
by the variables $x_1,\ldots,x_n$, via the identification $e_i \mapsto
x_i$. Each point in $\HG_0$ defines a unital, commutative, associative
algebra structure on $V$.  The structure constant tensor in $(S^2 V^*)
\otimes V$ of this algebra does not necessarily lie in $X^0(V)$, though,
because the standard form $\beta_0$ may not be invariant for it.

\begin{lm} \label{lm:Z1}
Let $Z_1$ be the subvariety 
\[ \{(I,[\beta]) \in \HG_0 \times \PP(S^2 V^*) \mid \beta \text{ is
invariant for } R/I\} \subseteq \HG_0 \times \PP(S^2 V^*). \]
Then the projection $Z_1 \to \HG_0$ is surjective and induces a bijection
on irreducible components.
\end{lm}

\begin{proof}
Indeed, every (possibly degenerate) invariant bilinear form on $R/I$ is of the form
$\beta(r,s)=\ell(rs)$ for a unique linear form $\ell \in (R/I)^*$,
namely, the form $\ell(r):=\beta(1,v)$.
Moreover, since for $I \in \HG_0$ the space $V$ is a vector space
complement of $I$ in $R$, the natural map $(R/I)^* \to V^*$ is a linear
bijection. We conclude that, in fact, $Z_1$ is isomorphic to $\HG_0
\times \PP(V^*)$ via the map that sends $(I,[\beta])$ to $(I,[v \mapsto
\beta(1,v)])$. So each component of $Z_1$ is just a component of $\HG_0$
times the projective space $\PP(V^*)$.
\end{proof}

\begin{lm} \label{lm:Z2}
Let $Z_2$ be the subvariety
\[ \{((I,[\beta]),g) \in Z_1 \times \GL(V) \mid g [\beta]=[\beta_0] \}
\subseteq Z_1 \times \GL(V). \]
Then the projection $Z_2 \to Z_1$ has dense image and induces a bijection
between irreducible components.
\end{lm}

\begin{proof}
For the first statement, if $I \in \HG_0$, then by definition there are
\emph{nondegenerate} invariant bilinear forms on $R/I$. These correspond to
a dense open subset of $\PP(V^*)$ via the correspondence in the proof
above. This shows that $Z_2 \to Z_1$ has dense image. This image, $U$,
is open in $Z_1$. 

Next we claim that, in the analytic topology, $Z_2 \to U$ is a
fibre bundle with fibre the group $\CC^* \cdot \OO(\beta_0) \subseteq
\GL(V)$; here $\OO(\beta_0)$ is the orthogonal group of the form
$\beta_0$. To see
this, it consider a point $(I,[\beta_1]) \in U$. By definition of
$U$, there exists a $g_1 \in \GL(V)$ such that $g_1 [\beta_1] =
[\beta_0]$. Furthermore, there exists a holomorphic map $\gamma$
defined in an open neighbourhood $\Omega$ in $\PP S^2 V^*$ of
$[\beta_0]$ to $\GL(V)$ such that $\gamma([\beta_0])=\id_V$ and
$\gamma([\beta])[\beta]=[\beta_0]$ for all $[\beta] \in \Omega$. Essentially, $\gamma([\beta])$ is
found by the Gram-Schmidt algorithm---note that in this algorithm one
has to divide by square roots of complex numbers, which, since
$[\beta]$ is close to $[\beta_0]$, are close to $1$; this
can be done holomorphically.

Now the map 
\begin{align*} 
&(U \cap (\HG_0 \times g_1^{-1}\Omega)) \times (\CC^* \cdot \OO(\beta_0)) \to 
Z_2, \\
&((I,[\beta]),g) \mapsto ((I,[\beta]),g \cdot \gamma(g_1 [\beta]) \cdot g_1)
\end{align*}
trivialises the map $Z_2 \to Z_1$ over an open neighbourhood of
$(I,[\beta_1])$; here we use that $\CC^* \cdot \OO(\beta_0)$ is the
stabiliser of $[\beta_0]$ in $\GL(V)$.

Now since $Z_2 \to U$ is a fibre bundle with {\em irreducible} fibre $\CC^*
\cdot \OO(V)$---this is where it is important that we work with the
projective space $\PP S^2 V^*$ rather than $S^2 V^*$; the orthogonal
group $\OO(V)$ itself has two components!---that map induces a bijection
between irreducible components. 
\end{proof}

\subsection{Completing the proof}

\begin{proof}[Proof of Theorem~\ref{thm:XY2}.]
Recall that $X^0(V)$ parameterises the unital associative, commutative
algebra structures on $V$ such that $\beta_0$ is invariant for the
multiplication. Now consider the map
\[ \GL(V) \times X^0(V) \to \HG_0, (g,T) \mapsto g \cdot \Phi(T)
\]
By Lemma~\ref{lm:ImPhi}, this map is surjective. Since $\GL(V)$ is
irreducible, the left-hand side has as many irreducible components as
$X^0(V)$. This shows that $X^0(V)$ has at least as many irreducible
components as $\HG_0$, hence as $\HG$ by Lemma~\ref{lm:HG0}. 

For the converse, by Lemmas~\ref{lm:Z1}, \ref{lm:Z2} $\HG_0$ has as
many irreducible components as $Z_2$. Now we claim that the morphism 
\begin{align*} 
	Z_2 &\to (S^2 V^*) \otimes V \\ ((I,[\beta]),g) &\mapsto 
\text{ the structure constant tensor of } R/(g \cdot I)
\end{align*}
has as image the variety $X^0(V)$. Indeed, if $((I,[\beta]),g)$ lies
in $Z_2$, then $\beta$ is an invariant symmetric bilinear form for the
multiplication on $R/I$, and $g \beta \in \CC^* \cdot \beta_0$ is an
invariant symmetric bilinear form for the multiplication on $R/(g \cdot
I)$; this therefore corresponds to an element in $X^0(V)$. We conclude
that the number of components of $X^0(V)$ is also at most that of
$\HG$. This concludes the proof.
\end{proof}

\subsection{Cubic dimension growth for the Gorenstein locus}
\label{sec:Cubic}

We conclude this paper with an observation on the dimension of 
$\dim(\HG)$.

\begin{prop}
The dimension of $\HG$, and hence that of the Hilbert scheme $\cH$ of $n$
points in $\AA^n$, is lower-bounded by a cubic polynomial in $n$ for $n
\to \infty$.
\end{prop}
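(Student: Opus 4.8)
The plan is to produce, for each $n$, a family of tensors in $X^0(\CC^n)$ of dimension $\Theta(n^3)$, and then use the morphism $\GL(V) \times X^0(V) \to \HG_0$ from the proof of Theorem~\ref{thm:XY2} — which is surjective and has irreducible source — to transport this lower bound to $\dim(\HG_0) = \dim(\HG)$. Since we already know from Section~\ref{sec:Many} that there are many \emph{weakly} odeco tensors (Proposition~\ref{prop:ZV}: $\dim Z(V) = \binom{\lfloor n/2\rfloor + 2}{3} + \binom{\lceil n/2\rceil}{2}$, which is cubic in $n$), the natural route is: take such a tensor, whose algebra is $2$-step nilpotent, and \emph{unitalise} it via the unitalisation morphism of Section~\ref{sec:Unitalisation} to land in $X^0$.

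First I would make the dimension count concrete. Fix a maximal isotropic subspace $U \subseteq V_1$ inside a vector space $V_1$ of dimension $n-2$ carrying a nondegenerate symmetric form, so $\dim U = \lfloor (n-2)/2 \rfloor$, and consider the $2$-step nilpotent algebras on $V_1$ coming from tensors in $S^3 U \subseteq X(V_1)$ (using the last Remark of Section~\ref{sec:Many}). The subvariety of such tensors has dimension $\dim S^3 U = \binom{\lfloor (n-2)/2\rfloor + 2}{3} = \Theta(n^3)$. Applying the unitalisation morphism $T \mapsto \widetilde{T}$ (Proposition~\ref{prop:UnitalisationMorphism}) gives tensors $\widetilde{T} \in X^0(\widetilde{V_1})$ with $\dim \widetilde{V_1} = n$; the unitalisation morphism is injective (one recovers $T$ from $\widetilde{T}$ by the de-unitalisation of Proposition~\ref{prop:DeUnitalisation}), so the image still has dimension $\Theta(n^3)$. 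Composing with the surjection $\GL(V)\times X^0(V) \twoheadrightarrow \HG_0$ shows $\dim \HG_0 \ge \dim X^0(V) \ge \Theta(n^3)$, and $\dim \HG = \dim \HG_0$ by Lemma~\ref{lm:HG0}.

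The main thing to check carefully — and I expect this to be the only genuine obstacle — is that the family of $2$-step nilpotent tensors really does contribute its full dimension to $X^0(V)$, i.e. that the unitalisation morphism does not collapse it. Injectivity of $T \mapsto \widetilde{T}$ handles this at the level of the map, but one should also confirm that the source family sits inside $X^0$ of the \emph{same} ambient dimension $n$: unitalising an $(n-2)$-dimensional algebra produces an $n$-dimensional one, which is why I start from dimension $n-2$. A cosmetic point is that $\widetilde{V_1}$ carries \emph{some} nondegenerate symmetric form, not literally $\beta_0$ on $\CC^n$; but all nondegenerate symmetric forms are $\GL$-equivalent, so after transporting by an element of $\GL(V)$ we land in $X^0(\CC^n)$ with $\beta_0$, and this only multiplies dimensions by a bounded amount (or rather, leaves them unchanged, as $\GL(V)$ acts). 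One last sanity check: the statement also asserts the bound for the full Hilbert scheme $\cH$, which is immediate since $\HG$ is an open subscheme of $\cH$, so $\dim \cH \ge \dim \HG$. The matching cubic \emph{upper} bound on $\dim \cH$ (hence $d(n) = \Theta(n^3)$) is classical — a subscheme of $\AA^n$ of length $n$ is cut out by an ideal containing $\fm^n$ for a suitable maximal ideal, and $\dim R/\fm^n$ grows cubically — so no extra work is needed there.
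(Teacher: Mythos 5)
Your route is essentially the paper's: a cubic-dimensional family of $2$-step nilpotent tensors in dimension $n-2$ (the paper uses all of $Z(\CC^{n-2})$, you use a single $S^3U$ for a fixed maximal isotropic $U$ --- asymptotically the same), pushed forward injectively via unitalisation into $X^0(\CC^n)$, then transported to $\HG_0$, with $\dim\HG=\dim\HG_0$ by Lemma~\ref{lm:HG0}. The construction and the dimension count for the source family are fine.

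The gap is in the final transfer step. You write that the surjection $\GL(V)\times X^0(V)\twoheadrightarrow \HG_0$ ``shows $\dim\HG_0\geq\dim X^0(V)$'', but a surjection gives the \emph{opposite} inequality: $\dim\HG_0\leq\dim\GL(V)+\dim X^0(V)$. As stated this is a non sequitur (compare $\AA^1\times\AA^3\to\AA^1$, projection to the first factor). What you actually need is a map \emph{into} $\HG_0$ that does not collapse dimension, and that is exactly the morphism $\Phi:X^0(V)\to\HG_0$ from Section~\ref{sec:XY2}, which is injective: given $I=\Phi(T)$, the images of $x_1,\ldots,x_n$ form a basis of $R/I$, so the multiplication $\mu_T(e_i,e_j)$ is read off from the coordinates of $x_ix_j\bmod I$ in that basis, recovering $T$. (Equivalently, one could keep the surjection and invoke the fibre-dimension theorem after observing that each fibre projects injectively into $\GL(V)$ --- but that argument reduces to the injectivity of $\Phi$ anyway, so it is cleaner to cite $\Phi$ directly, as the paper does.) With this replacement the proof is complete and coincides with the paper's.

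One small side remark: your parenthetical justification of the \emph{upper} cubic bound is not needed for the proposition, and as written it is incorrect --- $\dim_\CC R/\fm^n$ for $R=\CC[x_1,\ldots,x_n]$ is $\binom{2n-1}{n}$, which grows exponentially in $n$, not cubically. A cubic upper bound does follow from the ingredients already in play (e.g.\ $\dim\HG_0\leq\dim\GL(V)+\dim X^0(V)\leq n^2+\binom{n+2}{3}$ via the surjection and $X^0(V)\subseteq S^3V$), but it is not the route you sketched.
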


Note that this was already known for $\cH$ by
\cite[Theorem 9.2]{PoonenModuli}. The algebras constructed there are of the
form $A:=\CC[x_1,\ldots,x_d]/(V+\fm^3)$ where $\fm$ is the maximal ideal
$(x_1,\ldots,x_d)$ and where $V \subseteq \fm^2/\fm^3$ has the correct
codimension $r=n-1-d$ for this quotient to have dimension $n$. Since any
$1$-dimensional subspace of $\fm^2/(V+\fm^3)$ is a minimal ideal in $A$,
$A$ is not Gorenstein unless $\fm^2/(V+\fm^3)$ is one-dimensional, in
which case $r=1$. However, to obtain cubic behaviour in $n$, one needs $r$
to grow linearly with $d$. So the cubic-dimensional locus in
\cite{PoonenModuli}
is a non-Gorenstein part of the Hilbert scheme.

\begin{proof}
The unitalisation morphism sends $X(\CC^{n-2})$ into $X^0(\CC^{n})$
by Proposition~\ref{prop:UnitalisationMorphism}, and it does so
injectively. This means that the latter variety has dimension at least
that of $Z(\CC^{n-2})$, which is lower-bounded by a cubic polynomial by
Proposition~\ref{prop:ZV}. Furthermore, the morphism $\Phi:X^0(\CC^{n})
\to \HG$ is also injective. 
\end{proof}

\begin{re}
The coefficient of $n^3$ in $\dim(Z(\CC^n))$ equals $\frac{1}{48}$,
which is considerably smaller than the coefficient $\frac{2}{27}$ in
\cite{PoonenModuli} for the lower bound on the dimension of the Hilbert
scheme of $n$ points in $\AA^n$. We do not know whether the
$\frac{1}{48}$ can be improved.
\end{re}

\bibliographystyle{alpha}
\bibliography{odecoQuadrics}

\end{document}